\documentclass[a4paper,12pt]{article}
\usepackage[utf8]{inputenc}
\usepackage[cm]{aeguill}
\usepackage{amsfonts}
\usepackage{latexsym}
\usepackage[dvips]{graphicx}

\usepackage{amsmath}
\usepackage{amssymb}
\usepackage{amsthm}
\usepackage{bbm}
\usepackage{nicefrac}
\usepackage[T1]{fontenc}
 
\newtheorem{df}{Definition}[section]
\newtheorem{lm}[df]{Lemma}
\newtheorem{pr}[df]{Proposition}
\newtheorem{Th}[df]{Theorem}
\newtheorem{co}[df]{Corollary}
\newtheorem{rem}[df]{Remark}

\newcommand{\e}{\varepsilon}

\newcommand{\R}[1]{\mathbb{R}^{#1}}

\newcommand{\s}{\mathcal{S}}
\newcommand{\M}{\mathcal{M}}
\newcommand{\MM}{\widehat{\mathcal{M}}}

\renewcommand{\L}{\mathcal{L}}

\renewcommand{\AA}{\widetilde{\mathcal{A}}}
\newcommand{\AAA}{\widehat{\mathcal{A}}}
\newcommand{\HH}{\mathcal{H}}
\newcommand{\HHH}{\widehat{\mathcal{H}}}

\newcommand{\om}{\omega}

\newcommand{\f}{\varphi}
\newcommand{\F}{\Phi}

\renewcommand{\d}{\operatorname{d}}
\newcommand{\supp}{\operatorname{supp}}

\author{M. ZAVIDOVIQUE \\ \small \texttt {UMPA, ENS Lyon, 46 all\' ee d'Italie, 69007, Lyon, France} \\ \small \texttt{e-mail: maxime.zavidovique@umpa.ens-lyon.fr}}
\title{Weak KAM for commuting Hamiltonians}

\begin{document}

\maketitle

\begin{abstract}
For two commuting Tonelli Hamiltonians, we recover the commutation of the Lax-Oleinik semi-groups, a result of Barles and Tourin (\cite{barles}), using a direct geometrical method (Stoke's theorem). We also obtain a "generalization" of a theorem of Maderna (\cite{Mad}). More precisely, we prove that if the phase space is the cotangent of a compact manifold then the weak KAM solutions (or viscosity solutions of the critical stationary Hamilton-Jacobi equation) for $G$ and for $H$ are the same. As a  corollary we obtain the equality of the Aubry sets and of the Peierls barrier. This is also related to works of Sorrentino (\cite{So}) and Bernard (\cite{BeSy}).
\end{abstract}

\section*{Introduction}
It has been known for quite some time that the existence of first integrals affects the dynamics of Hamiltonian flows on the cotangent of a manifold. Indeed, the famous Arnol'd-Liouville theorem (\cite{Ar}) states the remarkable fact that under very mild compactness and connectedness conditions, if a Tonelli Hamiltonian $H$ defined on the cotangent of an $n$-dimensional manifold $M$ has $n$ everywhere independent first integrals in involution, then the manifold is necessarily a torus. Moreover the Hamiltonian flow is conjugated to a geodesic flow and $T^*M$ is foliated by invariant tori on which the flow is linear.\\
In the past decades, new techniques have been developed in order to study the dynamics of a single Tonelli Hamiltonian and existence of invariant sets. Aubry-Mather theory (see \cite{Ma},\cite{MaZ}, \cite{Man},\cite{Ba} for introductions) has had a huge development. More recently, thanks to Albert Fathi's weak KAM theory (see \cite{Fa}, \cite{FaMa} for introductions and \cite{FaSi}, \cite{Be1}, \cite{Mad} for further developments) the link between the geometrical point of view of Aubry-Mather theory and the widely studied PDE approach of Hamilton-Jacobi equations has  allowed to simplify the proofs of already known results (in both fields) and obtain new ones (see for example \cite{Fa1},\cite{Fa2},\cite{fatfigrif},\cite{Be2}). Moreover, a discrete version of weak KAM has already appeared fruitful in the related subject of optimal transportation (\cite{BeBu},\cite{BeBu1}, \cite{BeBu2} and \cite{fatfig07}).
\\
The connection between Aubry-Mather theory and first integrals has not, to our knowledge, yet been much studied. First results (although not formulated this way) appear in \cite{BeSy} where it is shown that given a Tonelli Hamiltonian $H$ on the cotangent space of a closed compact Manifold, the Aubry, Mather and Ma\~n\' e sets are symplectic invariants. This may be directly applied to the Hamiltonian flows of Tonelli first integrals of $H$ which are exact symplectomorphisms which preserve $H$. Recently, in \cite{So}, it is shown thanks to Aubry-Mather theory that in the Arnol'd-Liouville theorem, if the involution hypothesis between the first integrals is dropped, much information can still be recovered on the dynamics of $H$ and on its first integrals.\\
From the PDE point of view, in \cite{barles}, the authors study on $M=\R{n}$ the so called multi-time Hamilton Jacobi equation, that is, given Tonelli Hamiltonians $H_1, \ldots, H_k$ and an initial value $u_0:\R{n}\to \R{}$, they look for solutions $u:\R{n}\times \R{k}\to \R{}$ of the equation
\begin{eqnarray*}
\forall x\in \R{n},\  u(x,0,\ldots ,0)&=&u_0(x),\\
\frac{\partial u}{\partial t_1} +H_1(x,\d_x u)&=&0,\\
& \vdots &  \\
\frac{\partial u}{\partial t_k} +H_k(x,\d_x u)&=&0.\\
\end{eqnarray*}
By proving existence of such functions, they actually obtain a commutation property for the Lax-Oleinik semi-groups used in weak KAM theory. The same problem is studied under less stringent regularity hypothesis in \cite{MoRa}.
Let us now explain the setting we use and the results we obtain. Let $M$ be a finite dimensional $C^2$ complete connected Riemmanian manifold. We will say that a Hamiltonian $H:T^*M\to \R{}$ is Tonelli if it is $C^2$ and if it verifies the following conditions:
\begin{enumerate}
\item\textbf{uniform superlinearity:} for every $K>0$, there exists $C^*(K)\in \R{}$ such that 
$$\forall (x,p)\in T^*M,\  H(x,p)\geqslant K\|p\|-C^*(K),$$
\item\textbf{uniform boundedness}: for every $R\geqslant 0$, we have
$$A^*(R)=\sup \{H(x,p),\|p\|\leqslant R\}<+\infty,$$
\item\textbf{$C^2$-strict convexity in the fibers:} for every $(x,p)\in T^*M$, the second derivative along the fibers $\partial ^2H/\partial p^2 (x,p)$ is positive strictly definite.
\end{enumerate} 
We recall that $T^*M$ is equipped with a canonical symplectic structure by setting $\Omega=-\d \lambda$ where $\lambda$ is the canonical Liouville form. We may then define the Hamiltonian vector-field $X_H$ by 
$$\forall (x,p)\in T^*M,\  \Omega(X_H(x,p), .)=\d_{(x,p)}H.$$ 
We then may define the Lax-Oleinik semi-group: if $u:M\to \R{}\cup \{-\infty,+\infty\}$ is a function, we set
$$\forall x \in M,\ \forall s>0,\  T_H^{-s}u (x)=\inf_{\substack{\gamma \\ \gamma(s)=x}}u(\gamma(0))+\int_0^s L_H(\gamma(\sigma),\dot{\gamma}(\sigma))\d\sigma,$$
where the infimum is taken over all absolutely continuous curves reaching $x$ and where $L_H:TM\to \R{}$ is the Lagrangian associated with $H$  defined by
$$\forall (x,v)\in TM,\  L_H(x,v)=\max_{p\in T^*M} p(v)-H(x,p).$$
Now, if $G$ and $H$ are two Tonelli Hamiltonians, we will say that $G$ and $H$ Poisson commute if the function $\Omega (X_G,X_H)$ vanishes everywhere. We will give a direct proof of the following theorem which also results from \cite{barles} for $M=\R{n}$ and from \cite{viterbo}.
\begin{Th}\label{comu}
If $G$ and $H$ are two Tonelli Hamiltonians which Poisson commute, then their Lax-Oleinik semi-groups commute.
\end{Th}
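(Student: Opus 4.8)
The plan is to reduce the commutation of the two semi-groups to a single geometric identity about Lagrangian actions, which I would prove by Stokes' theorem exactly as the abstract advertises. First I would record the standard reformulations of Poisson commutation: since $\Omega(X_G,X_H)$ vanishes identically, the Hamiltonian flows $\phi_G^{\sigma}$ and $\phi_H^{\tau}$ commute, each flow preserves the other's vector field (so $D\phi_G^{\sigma}\cdot X_H=X_H\circ\phi_G^{\sigma}$), and each Hamiltonian is a first integral of the other flow, i.e. $H\circ\phi_G^{\sigma}=H$ and $G\circ\phi_H^{\tau}=G$. These equivalent facts will be used throughout. I would also recall that along any orbit $c(\sigma)=(x(\sigma),p(\sigma))$ of $X_H$ one has, by the Legendre relation $p\cdot\dot x=L_H+H$ and the constancy of $H$ along its orbit, the identity $\int_0^{\tau}L_H(x,\dot x)\,\d\sigma=\int_c\lambda-\tau\, H(c(0))$, and symmetrically for $G$.

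The heart of the argument is the following identity. Fix $z\in T^*M$ and consider $\Psi(\sigma,\tau)=\phi_G^{\sigma}\phi_H^{\tau}(z)$ on $[0,t]\times[0,s]$, with image the "parallelogram" $\Sigma$. Projecting to $M$, there are two natural curves joining $\pi(z)$ to the common endpoint $\pi(\phi_G^{t}\phi_H^{s}(z))=\pi(\phi_H^{s}\phi_G^{t}(z))$: path $A$, obtained by flowing first by $H$ for time $s$ and then by $G$ for time $t$, and path $B$, obtained by flowing first by $G$ and then by $H$. Each is a concatenation of two Hamiltonian orbit arcs, so I can evaluate both Lagrangian actions through the identity above. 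Using the invariances $H\circ\phi_G=H$ and $G\circ\phi_H=G$, the energy contributions $sH(z)+tG(z)$ of path $A$ and $tG(z)+sH(z)$ of path $B$ coincide and cancel, so the difference of the two actions equals $\oint_{\partial\Sigma}\lambda$, where $\partial\Sigma$ is the loop formed by the lift of $A$ followed by the reverse of the lift of $B$. By Stokes this is $\int_{\Sigma}\d\lambda=-\int_{\Sigma}\Omega$; and since $\partial_{\sigma}\Psi=X_G(\Psi)$ and $\partial_{\tau}\Psi=X_H(\Psi)$ (here I use that $\phi_G$ preserves $X_H$), the integrand is $\Omega(X_G,X_H)\equiv0$. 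Hence the actions of $A$ and $B$ are equal, for every $z$.

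It remains to pass from this equality to the commutation of the semi-groups, for which I would prove $T_G^{-t}T_H^{-s}u(x)\leqslant T_H^{-s}T_G^{-t}u(x)$ and then invoke the symmetry $G\leftrightarrow H$, $s\leftrightarrow t$. I would take a minimizing configuration for $T_H^{-s}T_G^{-t}u(x)$, i.e. a $G$-minimizer $\alpha$ on $[0,t]$ from some $x_0$ to a junction point $y$, followed by an $H$-minimizer $\beta$ on $[0,s]$ from $y$ to $x$; these arcs are genuine orbits of $\phi_G$ and $\phi_H$. The crucial point is that at the \emph{free} junction $y$ the momenta match, $p_{\alpha}(t)=p_{\beta}(0)$: this is the Weierstrass--Erd\-mann corner condition, which here follows from the fact that $\partial_y$ of the minimal $G$-action is the arrival momentum $p_{\alpha}(t)$ while $\partial_y$ of the minimal $H$-action is $-p_{\beta}(0)$, so stationarity in $y$ forces their equality. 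Granting this, the configuration is exactly the lift of path $B$ of the parallelogram based at $z=(x_0,p_{\alpha}(0))$, and the corresponding path $A$ is an admissible competitor for $T_G^{-t}T_H^{-s}u(x)$: it starts at the same point $x_0$ (same value $u(x_0)$), ends at $x$ by commutation of the flows, and has the same action as $B$ by the identity above. This yields the desired inequality.

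The main obstacle I anticipate is precisely this last step rather than the Stokes computation: one must guarantee the \emph{existence} of Tonelli minimizers on the (a priori non-compact) complete manifold $M$ under the three Tonelli hypotheses, and one must justify the momentum-matching at the junction rigorously despite $u$ being arbitrary, using the semiconcavity and a.e.\ differentiability of the Lax--Oleinik value functions (equivalently, the $C^1$ dependence of the minimal action $h^H_s(y,x)$ on its endpoints). A secondary technical point is the careful verification that $\partial_{\tau}\Psi=X_H(\Psi)$, i.e.\ the invariance $D\phi_G^{\sigma}\cdot X_H=X_H\circ\phi_G^{\sigma}$, together with tracking orientations in the application of Stokes' theorem so that the boundary loop is traversed coherently.
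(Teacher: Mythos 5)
Your proposal is correct and follows essentially the same route as the paper: momentum matching at the free junction via the corner condition on the derivatives of the action functionals, expressing the actions of Hamiltonian orbit segments through the Liouville form minus the (conserved) energy terms, and killing the boundary integral of $\lambda$ by Stokes' theorem since $\Omega(X_G,X_H)\equiv 0$ on the surface swept by the two commuting flows. The only cosmetic difference is that the paper isolates the key identity as a statement about the action functionals alone, namely $\inf_{y} \left( A_G^s(x,y)+A_H^t(y,z)\right)=\inf_{y}\left( A_H^t(x,y)+A_G^s(y,z)\right)$, fixing both endpoints and minimizing only over the junction point, which sidesteps the issue you flag about attaining the infimum over the starting point for an arbitrary $u$.
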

In order to state the second theorem, we need to introduce the notion of weak KAM solution:
\begin{df}\rm
We say that a function $u:M\to\R{}$ is a weak KAM solution for $H$ (resp. $G$) if and only if there is a constant $\alpha\in \R{}$ such that for any $t>0$, we have
$$u=T_H^{-t}u +t\alpha,$$
(resp. $u=T_G^{-t}u +t\alpha$).
\end{df}
\begin{Th}\label{inva}
If $M$ is compact, then any weak KAM solution for $G$ is a weak KAM solution for $H$.
\end{Th}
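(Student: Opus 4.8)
The plan is to realise weak KAM solutions as fixed points of a normalised, non-expansive semi-group and to transport this property from $G$ to $H$ using the commutation of Theorem \ref{comu}. Since $M$ is compact, each of $G,H$ has a well defined critical value, say $c(G)$ and $c(H)$, characterised as the only constant for which weak KAM solutions exist; I set $\widehat T_G^{-t}=T_G^{-t}+c(G)\,t$ and $\widehat T_H^{-t}=T_H^{-t}+c(H)\,t$. These operators are monotone and non-expansive for the uniform norm, they commute with the addition of constants, and, because adding a term linear in $t$ does not affect commutation, Theorem \ref{comu} gives $\widehat T_G^{-s}\circ\widehat T_H^{-t}=\widehat T_H^{-t}\circ\widehat T_G^{-s}$. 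By definition the weak KAM solutions of $G$ (resp. of $H$) are exactly the fixed points of $(\widehat T_G^{-t})_{t>0}$ (resp. of $(\widehat T_H^{-t})_{t>0}$); I denote by $\s_G$ the set of the former. As weak KAM solutions are dominated and $M$ is compact, $\s_G$ is equi-Lipschitz and bounded, hence compact in $C^0(M,\R{})$.

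First I would check that $\s_G$ is invariant under the $H$-semi-group. For $u\in\s_G$ and $s,t>0$, commutation and invariance under constants give
$$\widehat T_G^{-s}\bigl(\widehat T_H^{-t}u\bigr)=\widehat T_H^{-t}\bigl(\widehat T_G^{-s}u\bigr)=\widehat T_H^{-t}u,$$
so that $w(t):=\widehat T_H^{-t}u$ again lies in $\s_G$, with $w(0)=u$ and $w(t+s)=\widehat T_H^{-s}w(t)$. Thus $t\mapsto w(t)$ is a relatively compact orbit inside $\s_G$ for the non-expansive semi-group $\widehat T_H^{-t}$, and the theorem amounts to proving that this orbit is constant, i.e. that $u$ is a fixed point of $\widehat T_H^{-t}$.

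Next I would reduce the problem to the Aubry set of $G$. Every weak KAM solution $v$ of $G$ is recovered from its trace on the projected Aubry set $\A_G$ through the Peierls barrier $h_G$, namely $v(x)=\min_{y\in\A_G}\bigl(v(y)+h_G(y,x)\bigr)$; consequently $v$ is entirely determined by $v|_{\A_G}$, and on each static class of $\A_G$ two weak KAM solutions of $G$ differ by an additive constant. Applying this to $w(t)$ and $u$, it suffices to show that $w(t)=u$ on $\A_G$, equivalently that on each static class $\Gamma$ the constant $\theta_\Gamma(t):=(w(t)-u)|_\Gamma$ vanishes. As a consistency check, monotonicity shows that $\max_M(w(t)-u)$ is subadditive and $\min_M(w(t)-u)$ superadditive in $t$, both attained on $\A_G$; since the orbit is bounded, Fekete's lemma forces $\max_M(w(t)-u)\geqslant 0\geqslant\min_M(w(t)-u)$, so the $\theta_\Gamma(t)$ must straddle $0$.

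The remaining point, which I expect to be the main obstacle, is to evaluate $\widehat T_H^{-t}u$ on $\A_G$ and recover $u$. This is where the commuting structure has to be used substantially. Because $G$ and $H$ Poisson commute, the Hamiltonian flow of $H$ is an exact symplectomorphism preserving $G$, so it should preserve the Aubry set $\widetilde{\A}_G\subset T^*M$ — a Lipschitz graph over $\A_G$ carrying the differential of every weak KAM solution — and $H$ should be constant along the recurrent dynamics there, necessarily equal to $c(H)$ (this is the content alluded to in the introduction through \cite{BeSy} and \cite{So}). Granting this, I would take a point $x_0\in\A_G$ and use the backward $H$-orbit of $(x_0,\d_xu(x_0))$, projected to $M$, as a competitor in the infimum defining $T_H^{-t}u(x_0)$: Legendre duality together with $H\equiv c(H)$ on $\widetilde{\A}_G$ turns this curve into an $H$-calibrated curve for $u$, yielding $\widehat T_H^{-t}u(x_0)\leqslant u(x_0)$; the reverse inequality $u\prec L_H+c(H)$ along curves lying in $\widetilde{\A}_G$ gives $\widehat T_H^{-t}u(x_0)\geqslant u(x_0)$. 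Hence $\theta_\Gamma(t)=0$ on every static class, and the representation formula propagates the equality $w(t)=u$ to all of $M$. The genuinely delicate step is the geometric identification above — that the $H$-flow preserves $\widetilde{\A}_G$ and that $H\equiv c(H)$ on it — for which the Stokes-type method behind Theorem \ref{comu} and the symplectic invariance of the Aubry set are the natural tools.
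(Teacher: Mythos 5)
The soft parts of your outline are sound and parallel the paper: by the commutation theorem \ref{comu}, the normalised semi-group $\widehat{T}_H^{-t}=T_H^{-t}+c(H)t$ preserves the set of $G$-weak KAM solutions (this is item 3 of Lemma \ref{sta}), and it is legitimate to reduce the identity $w(t)=u$ to a uniqueness set (you use the projected Aubry set $\A_G$, the paper uses the projected Mather set $\M_G$). The trouble is at the crux, the evaluation of $\widehat{T}_H^{-t}u$ on that set, where you lean on two claims that are neither proved nor available without circularity. First, ``$H\equiv c(H)$ on $\AAA_G$'': the symplectic invariance of \cite{BeSy} gives $\F_H^t(\AAA_G)=\AAA_G$, but says nothing about the \emph{values} of $H$ there; $H$ is constant along each of its own orbits in $\AAA_G$, but nothing a priori forces distinct static classes to carry the same value, let alone the value $c(H)$. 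That statement is essentially $\AAA_G=\AAA_H$ combined with Carneiro's theorem \cite{Car}, i.e.\ a consequence of Theorems \ref{same} and \ref{equal} of the paper, which rest on the very theorem you are proving. Second, and independently fatal, your reverse inequality: $\widehat{T}_H^{-t}u(x_0)\geqslant u(x_0)$ requires $u(\gamma(0))+\int_0^t L_H(\gamma,\dot\gamma)\d\sigma+tc(H)\geqslant u(x_0)$ for \emph{every} absolutely continuous curve $\gamma$ ending at $x_0$, whereas you only invoke domination along curves lying in $\AAA_G$; domination along all curves is exactly the assertion that $u$ is an $H$-critical subsolution, again part of the conclusion. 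Note finally that even if you upgrade your competitor bound to an exact identity via the characterisation of Lax--Oleinik minimizers at points of differentiability (the tool the paper cites from \cite{Fa}, (4.11.1)), you obtain $T_H^{-t}u(x_0)=u(x_0)-tH(x_0,p_0)$ with $p_0=\d_{x_0}u$, so your single-function argument still stands or falls with the unproved claim $H(x_0,p_0)=c(H)$.

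What the paper does instead, and what is missing from your proposal, is a comparison argument that makes all unknown quantities cancel. DeMarr's fixed point theorem for commuting non-expansive maps (Theorem \ref{dou}) produces a \emph{double} weak KAM solution $u_0$, a simultaneous fixed point of both normalised semi-groups. For $x\in\M_G$ with $(x,p)\in\MM_G$, both $T_H^{-s}u_-$ and $T_H^{-s}u_0$ are $G$-weak KAM solutions, hence differentiable at $x$ with the \emph{same} differential $p$ (graph property and independence of the differential from the critical subsolution); by the minimizer characterisation just mentioned, both are therefore calibrated at $x$ by the same curve, the projection of the backward $H$-orbit of $(x,p)$, which stays in $\MM_G$ by symplectic invariance. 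In the difference $v_s(x)=T_H^{-s}u_-(x)-T_H^{-s}u_0(x)$ the two Lagrangian integrals are literally identical and cancel, giving $v_s(x)=v_0(x(-s))$: no information on the value of $H$ over $\MM_G$, and no $H$-subsolution property of $u_-$, is ever needed. Since $v_0$ has vanishing differential on $\M_G$ and the orbit stays in $\M_G$, $v_0$ is constant along it, whence $u_-=T_H^{-s}u_-+s\alpha_H[0]$ on $\M_G$, and the uniqueness-set property of $\M_G$ concludes. If you want to repair your outline, replace the pair (competitor bound, reverse inequality) by this comparison with $u_0$: the existence of $u_0$ is the one genuinely new ingredient, and it is precisely what dissolves both of your gaps (and, a posteriori, it proves your claim that $H\equiv\alpha_H[0]$ on the Mather set of $G$).
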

We then introduce the notion of subsolution with the following definition:
$u:M\to\mathbb{R}$ is an $(\alpha,H)$-subsolution (resp. $(\alpha,G)$-subsolution) 
 if
$$\forall (x,y,t)\in M^2\times \mathbb{R}_+,\  u(y)-u(x)\leqslant \inf \int_0^t L_H(\gamma(\sigma),\dot{\gamma}(\sigma))\d\sigma
+t\alpha.$$
 where the infimum is taken on all absolutely continuous curves $\gamma :[0,t]\to M$ such that $\gamma(0)=x$ and $\gamma(t)=y$
(resp. $ u(y)-u(x)\leqslant \inf\int_0^t L_G(\gamma(\sigma),\dot{\gamma}(\sigma))\d\sigma
+t\alpha.$).
\begin{Th}\label{subs}
If there exists a  function $u:M\to\R{}$ which is both an $(\alpha,G)$-subsolution and an $(\alpha',H)$-subsolution for some constants $(\alpha,\alpha')$ then there is a $C^{1,1}$ function $u'$ which is also both an $(\alpha,G)$-subsolution and an $(\alpha',H)$-subsolution.
\end{Th}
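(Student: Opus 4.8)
The plan is to regularize the given common subsolution $u$ by composing Lax--Oleinik semigroups of the single Hamiltonian $H$, following the $C^{1,1}$ regularization scheme of Bernard, and then to use the commutation of Theorem \ref{comu} to make sure that the $(\alpha,G)$-subsolution property survives the smoothing. The first step is to record the semigroup reformulation of the subsolution property: taking the infimum over curves in the defining inequality, $u$ is an $(\alpha',H)$-subsolution if and only if $u\leqslant T_H^{-t}u+t\alpha'$ for every $t>0$, and equivalently $T_H^{+t}u-t\alpha'\leqslant u$ for every $t>0$, where $T_H^{+t}u(x)=\sup_{\gamma(0)=x}u(\gamma(t))-\int_0^t L_H(\gamma(\sigma),\dot\gamma(\sigma))\d\sigma$ is the forward semigroup; the analogous equivalences hold for $G$. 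I will also use two elementary facts: each semigroup is monotone, and it commutes with the addition of a constant.

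Set $u':=T_H^{-t}(T_H^{+t}u)$ for a fixed $t>0$ (the constants $\pm t\alpha'$ cancel in the composition). Since the forward semigroup sends $(\alpha',H)$-subsolutions to $(\alpha',H)$-subsolutions, $T_H^{+t}u-t\alpha'$ is again a subsolution, and applying the monotone operator $T_H^{-t}$ to the two inequalities $T_H^{+t}u-t\alpha'\leqslant u\leqslant T_H^{-t}u+t\alpha'$ sandwiches $u'$ between $T_H^{+t}u-t\alpha'$ and $T_H^{-t}u+t\alpha'$, so that $u'$ is again an $(\alpha',H)$-subsolution and converges to $u$ as $t\to 0$. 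The key regularity input is that $T_H^{+t}$ produces semiconvex functions while $T_H^{-t}$ produces semiconcave ones, and that composing the two with a matching time parameter yields a function of class $C^{1,1}$: this is the Lax--Oleinik incarnation of the Lasry--Lions double regularization, made global on $M$ by Bernard. Thus $u'$ is a $C^{1,1}$ function which is an $(\alpha',H)$-subsolution.

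It remains to verify that $u'$ is still an $(\alpha,G)$-subsolution, i.e. that $u'\leqslant T_G^{-s}u'+s\alpha$ for all $s>0$. Writing $u'=T_H^{-t}T_H^{+t}u$ and pushing $T_G^{-s}$ through both factors, I would invoke the commutation $T_G^{-s}T_H^{-t}=T_H^{-t}T_G^{-s}$ of Theorem \ref{comu} together with its forward counterpart $T_G^{-s}T_H^{+t}=T_H^{+t}T_G^{-s}$; granting these, $T_G^{-s}u'=T_H^{-t}T_H^{+t}(T_G^{-s}u)$. Applying the monotone operator $T_H^{-t}T_H^{+t}$ to the $(\alpha,G)$-subsolution inequality $u\leqslant T_G^{-s}u+s\alpha$, and using that this operator commutes with the additive constant $s\alpha$, gives exactly $u'\leqslant T_G^{-s}u'+s\alpha$. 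Hence $u'$ is the desired $C^{1,1}$ function, subsolution for both $G$ and $H$.

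The main obstacle is precisely this last commutation input. Theorem \ref{comu} as stated produces commutation of the backward semigroups, whereas the argument also requires a backward semigroup of $G$ to commute with a forward semigroup of $H$. I expect this cross-commutation to follow from the same geometric (Stokes) mechanism underlying Theorem \ref{comu}, namely from the commutation of the Hamiltonian flows of $G$ and $H$, which governs both the minimizing and the maximizing extremals simultaneously; note that it cannot simply be deduced by passing to the reversed Hamiltonian, since $\check G(x,p)=G(x,-p)$ and $H$ need not Poisson commute. A secondary technical point is to control the semiconcavity and semiconvexity constants uniformly so that the Lasry--Lions step genuinely delivers $C^{1,1}$ regularity; this is where the compactness of $M$ enters.
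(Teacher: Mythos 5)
Your construction fails at two distinct points, and the first one is fatal as written. The regularity step is wrong: with matched time parameters, $u'=T_H^{-t}T_H^{+t}u$ is in general \emph{not} $C^{1,1}$, because this operator is not regularizing at all on a large class of admissible inputs. From the elementary inequalities $T_H^{+t}T_H^{-t}\leqslant \mathrm{id}\leqslant T_H^{-t}T_H^{+t}$ and monotonicity one gets the identity $T_H^{-t}T_H^{+t}T_H^{-t}=T_H^{-t}$, so every function of the form $T_H^{-t}v+\mathrm{const}$ is a \emph{fixed point} of $T_H^{-t}T_H^{+t}$. In particular every negative weak KAM solution for $H$ is such a fixed point; when $M$ is compact these are legitimate inputs $u$ for the theorem (by Theorem \ref{inv} they are common subsolutions for $G$ and $H$), and they typically have shocks, hence are not even $C^1$. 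Your $u'$ then equals $u$ and nothing has been gained. This is exactly why Bernard's scheme takes the backward time small compared to the semiconvexity constant produced by the forward step (a constant that blows up as the forward time tends to $0$), and why the paper instead runs the iterated scheme of Fathi--Figalli--Rifford, $u_n=T_G^{+\e_n}T_G^{-\e_n}\cdots T_G^{+\e_1}T_G^{-\e_1}u_0$ with carefully chosen sequences; matched times are the degenerate case (already in classical Lasry--Lions one needs $\delta<\e$). Note also that Theorem \ref{subs} does not assume $M$ compact, so your final appeal to compactness to control the constants is not available; the iterated scheme is precisely what handles the general case.

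The commutation step contains a second genuine error: the cross-commutation $T_G^{-s}T_H^{+t}=T_H^{+t}T_G^{-s}$ that you expect to follow ``from the same Stokes mechanism'' is false. Take $G=H$ (which Poisson commutes with itself) and $s=t$: equality would force $T_H^{-t}T_H^{+t}u=T_H^{+t}T_H^{-t}u$ for every $u$, and since the left-hand side is $\geqslant u$ and the right-hand side is $\leqslant u$, every $u$ would satisfy $T_H^{-t}T_H^{+t}u=u$; this fails for any continuous $u$ that is not locally semiconcave, since $T_H^{-t}$ of anything is locally semiconcave. What your argument actually needs is only the one-sided inequality $T_H^{+t}T_G^{-s}u\leqslant T_G^{-s}T_H^{+t}u$ (then $T_H^{-t}T_H^{+t}T_G^{-s}u\leqslant T_H^{-t}T_G^{-s}T_H^{+t}u=T_G^{-s}u'$ by Theorem \ref{com}), and this inequality is true: it follows from Lemma \ref{com1}, since for all $x,y,z$ one has $\inf_v\bigl(A_H^t(y,v)+A_G^s(v,z)\bigr)=\inf_v\bigl(A_G^s(y,v)+A_H^t(v,z)\bigr)\leqslant A_G^s(y,x)+A_H^t(x,z)$, which is exactly what is needed after unwinding the definitions. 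But you neither isolate nor prove this; you ask for the false equality instead. The paper sidesteps the whole issue by smoothing with the semigroups of $G$ alone: preserving the $(\alpha',H)$-subsolution property along the iteration then requires only the same-sign commutations, namely Theorem \ref{com} for $T_G^{-\e}$ (Lemma \ref{sta}) and Theorem \ref{com+} for $T_G^{+\e}$ (Lemma \ref{sta+}), the latter being valid because $\widehat{G}$ and $\widehat{H}$ still Poisson commute; one concludes because $\HH(\alpha,\alpha')$ is closed for the compact open topology. If you repair both points (unbalanced or iterated smoothing times, and the one-sided cross inequality in place of the false equality), your variant smoothing with the $H$-semigroups can be made to work, but as written both key steps fail.
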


In \cite{Fa1}, Fathi gives a canonical way to pair positive weak KAM solutions with negative weak KAM solutions in the compact case. We prove in the last section that this pairing is the same for commuting Hamiltonians (see theorem \ref{same}). As a  corollary, we establish that the Aubry sets, the Ma\~n\' e sets and the Peierls barrier (defined in section \ref{4}) coincide for both Hamiltonians.

Finally, in the last section, we study some links between the Mather $\alpha$ functions (or effective Hamiltonians) of commuting Hamiltonians. More precisely, we show that their flat parts are the same.

While this paper was being written, similar results were obtained independently by X. Cui and J. Li.  in a preprint. 
For the current version of their work see \cite{chinois2}.

\section*{Acknowledgment}
I first would like to thank Nalini Anantharaman for pointing out to me that it would be interesting to study weak KAM theory for commuting Hamiltonians.
I would like to thank Albert Fathi for his careful reading of the manuscript and for his comments and remarks during my research on this subject. I also thank Bruno S\' evennec for useful discussions. This paper was partially elaborated during a stay at the Sapienza University in Rome. I wish to thank Antonio Siconolfi, Andrea Davini for useful discussions  and the Dipartimento di Matematica "Guido Castelnuovo" for its hospitality while I was there. I also would like to thank Explora'doc which partially supported me during this stay. Finally, I would like to thank the ANR KAM faible (Project BLANC07-3\_187245, Hamilton-Jacobi and Weak KAM Theory) for its support during my research.

\section{Commutation property for the Lax-Oleinik semi-groups}
Let $M$ be a $C^2$ complete connected manifold. In the sequel, we will denote by $\lambda$ the canonical Liouville form defined on $TT^*M$ and by $\Omega=-\d \lambda$ the canonical symplectic form. Let $H$ and $G$ be two Tonelli Hamiltonians which commute. More precisely, this means that $\{G,H\}=\Omega(X_G,X_H)=0$ where $\{.,.\}$ denotes the Poisson bracket and $X_G$, $X_H$ the Hamiltonian vector-fields of $G$, $H$. By basic properties of the Poisson bracket, we have $[X_G,X_H]=X_{\{G,H\}}=0$. Therefore the Hamiltonian flows commute. Finally, from $\Omega(X_G,X_H)=0$, by definition of the Hamiltonian vector-field, we deduce that $\d H(X_G)=0$ and $\d G(X_H)=0$ which means that $G$ is constant on the trajectories of $X_H$ (or in other terms, $G$ is a first integral of $H$) and vice versa.

We will denote by $L_G$ and $L_H$ the Lagrangians associated with $G$ and $H$ and by $\L_G$ and $\L_H$ the respective Legendre transforms, $\f_G$, $\F_G$ and $\f_H$, $\F_H$ will be respectively the Lagrangian and Hamiltonian flows of respectively $G$ and $H$. Finally, $T_G^-$ and $T_H^-$ are the Lax-Oleinik semi-groups associated with $L_G$ and $L_H$, that is if $u:M\to \R{}\cup \{-\infty,+\infty\}$ is a function,
$$\forall x \in M,\forall s>0,\  T_G^{-s}u (x)=\inf u(\gamma (0))+\int_0^s L_G(\gamma(\sigma),\dot{\gamma}(\sigma))\d\sigma,$$
where the infimum is taken over all absolutely continuous curves $\gamma : [0,s] \to M$ with $\gamma(s)=x$. Obviously, the definition of the Lax-Oleinik semi-group associated with $H$ is similar. For an exposition of these definitions, see \cite{Fa}.
\\
The following has already appeared in \cite{barles} in a different setting, with a different formulation and in \cite{MoRa} for less regular Hamiltonians  (see also \cite{viterbo} for related results). It is mainly \ref{comu}, let us reformulate it:
\begin{Th}[G. Barles-A. Tourin]\label{com}
 The Lax-Oleinik semi-groups commute, that is, if $u:M\to \R{}\cup \{-\infty,+\infty\}$ is a function and $s,t$ are two positive real numbers then 
$$T_G^{-s}T_H^{-t}u=T_H^{-t}T_G^{-s}u.$$
\end{Th}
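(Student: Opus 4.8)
The plan is to prove the two inequalities $T_G^{-s}T_H^{-t}u \geqslant T_H^{-t}T_G^{-s}u$ and $T_G^{-s}T_H^{-t}u \leqslant T_H^{-t}T_G^{-s}u$ separately; since the data are symmetric under exchanging $(G,s)$ with $(H,t)$, the second follows from the first, so I would only write out one of them. Fix $x\in M$ and unwind the composition: $T_G^{-s}T_H^{-t}u(x)$ is the infimum, over pairs of absolutely continuous curves $\eta:[0,t]\to M$ and $\gamma:[0,s]\to M$ with $\eta(t)=\gamma(0)$ and $\gamma(s)=x$, of the quantity $u(\eta(0))+\int_0^t L_H(\eta,\dot\eta)\,\d\sigma+\int_0^s L_G(\gamma,\dot\gamma)\,\d\sigma$, i.e. of the cost of a broken path that first runs the $H$-problem and then the $G$-problem. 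The goal is to show that optimizing over such $H$-then-$G$ paths gives the same value as optimizing over $G$-then-$H$ paths.

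First I would select a minimizing configuration $(\eta,\gamma)$, whose existence follows from the Tonelli hypotheses (for general $u$ one reduces to the case where the infimum is finite and attained). Both pieces are then $C^2$ extremals and lift to orbits of the Hamiltonian flows $\F_H$ and $\F_G$. The key point is the behaviour at the junction $z:=\eta(t)=\gamma(0)$: because this point is a \emph{free} variable in the minimization, the first-order condition obtained by varying it reads $\partial_v L_H(z,\dot\eta(t))=\partial_v L_G(z,\dot\gamma(0))$, that is, the momentum of the lift of $\eta$ at its right endpoint equals the momentum of the lift of $\gamma$ at its left endpoint. Hence there is a single point $(z,p_z)\in T^*M$ from which $\gamma$ is recovered by flowing forward with $\F_G$ and $\eta$ by flowing backward with $\F_H$.

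Next I would run the geometric argument on the commuting flows. Consider the map $\Sigma(\sigma,\tau)=\F_G^\sigma\F_H^\tau(z,p_z)$ on the rectangle $[0,s]\times[-t,0]$; since the flows commute this is well defined, and its two opposite corners are joined by two staircase paths on $\partial\Sigma$, namely $y\to z\to x$ (the original $H$-then-$G$ path, with $y=\eta(0)$) and $y\to w\to x$ (a $G$-then-$H$ path, with $w=\pi\F_H^{-t}\F_G^s(z,p_z)$). Stokes' theorem gives $\oint_{\partial\Sigma}\lambda=\int_\Sigma\d\lambda=-\int_\Sigma\Omega$, and the integrand vanishes because the tangent plane to $\Sigma$ is spanned by $X_G$ and $X_H$ while $\Omega(X_G,X_H)=\{G,H\}=0$. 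Thus the $\lambda$-integrals along the two staircase paths agree. I would then convert each $\lambda$-integral into an action, using $\lambda(X_H)=L_H+H$ along an $H$-orbit and $\lambda(X_G)=L_G+G$ along a $G$-orbit, and observing that both $H$ and $G$ are \emph{constant} on all of $\Sigma$ (each is a first integral of the other and is preserved by its own flow). The energy contributions $t\,H$ and $s\,G$ occur identically on both sides and cancel, leaving $\int_\eta L_H+\int_\gamma L_G=\int_{C}L_G+\int_{D}L_H$, where $C$ and $D$ are the $G$-orbit and $H$-orbit making up the second staircase. Adding $u(y)$ to both sides exhibits a legitimate competitor for $T_H^{-t}T_G^{-s}u(x)$ whose cost equals $T_G^{-s}T_H^{-t}u(x)$, giving $T_H^{-t}T_G^{-s}u(x)\leqslant T_G^{-s}T_H^{-t}u(x)$.

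The step I expect to be the main obstacle is not the Stokes computation itself, which is clean once the surface $\Sigma$ is in place, but rather justifying that the surface \emph{can} be built: the matching of momenta at the junction and, upstream of it, the existence and regularity of the minimizing configuration for arbitrary $u$ (including the values $\pm\infty$ allowed in the statement). The momentum matching I would settle by the free-endpoint corner condition above; the existence and finiteness issue I would handle either through the standard a priori compactness of Tonelli minimizers or by first proving the identity for continuous $u$ and then extending it via the monotonicity of the Lax–Oleinik semi-groups and their commutation with the addition of constants.
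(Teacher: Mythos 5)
Your proposal is correct, and its core is the same argument as the paper's: a minimizing junction point, matching of the two momenta there via the free-endpoint first-order condition, the rectangle swept out by the two commuting Hamiltonian flows, Stokes' theorem together with the vanishing of $\Omega(X_G,X_H)$ on that rectangle, and conversion of the $\lambda$-integrals into Lagrangian actions with the energy terms cancelling because each Hamiltonian is a first integral of the other. The one place where the paper is organized differently is exactly the point you flag as your main obstacle. The paper never minimizes the full functional containing $u$; it first proves a $u$-independent kernel identity (Lemma \ref{com1}): for all $(x,z)\in M^2$, $\inf_{y\in M} A_G^s(x,y)+A_H^t(y,z)=\inf_{y\in M} A_H^t(x,y)+A_G^s(y,z)$. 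Your staircase-and-Stokes argument is precisely the proof of this identity, and in that setting the minimizing junction $y_0$ always exists (the action functionals are finite, locally semi-concave and coercive), so no finiteness or regularity hypothesis on anything ever enters. The theorem for arbitrary $u:M\to\R{}\cup\{-\infty,+\infty\}$ then follows in one line by writing $T_G^{-s}T_H^{-t}u(x)=\inf_{z}\inf_{y}\, u(z)+A_H^{t}(z,y)+A_G^{s}(y,x)$ and exchanging the two infima, which is legitimate in the extended reals with no assumption on $u$ (the $A$-terms are finite, so all sums are well defined). This makes your proposed reduction (``first continuous $u$, then extend by monotonicity'') unnecessary --- and that is fortunate, because it is the only genuinely shaky step in your write-up: for extended-valued $u$ a minimizing configuration $(\eta,\gamma)$ need not exist at all, and it is not clear how monotonicity and commutation with constants alone would carry the identity from continuous $u$ to arbitrary $u$. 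If you restructure your argument so that the Stokes computation proves the kernel identity first, the rest of your text goes through verbatim and coincides with the paper's proof.
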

 In order to prove this statement, let us introduce the action functionals (we define it here for $G$, the definition for $H$ is the same):
 \begin{df}\rm
 Let $s>0$ and $(x,y)\in M^2$, then we set
 $$A_G^{s} (x,y)=\inf \int_0^s L_G(\gamma(\sigma),\dot{\gamma}(\sigma))\d\sigma,$$
 where the infimum is taken on all absolutely continuous curves $\gamma : [0,s]\to M$ with $\gamma(0)=x$ and $\gamma(s)=y$.
 \end{df}
The proof of \ref{com} will be  a straight consequence of the following lemma:

\begin{lm}\label{com1}
Let $s,t>0$ be two positive real numbers then the following holds:
$$\forall(x,z)\in M^2,\  \inf_{y\in M} A_G^s(x,y)+A_H^t(y,z)=\inf_{y\in M} A_H^t(x,y)+A_G^s(y,z).$$
\end{lm}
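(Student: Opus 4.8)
The plan is to read off the identity from the geometry of the two commuting flows and Stokes' theorem, as announced. Write $\pi:T^*M\to M$ for the canonical projection, and recall the basic reduction of the action to the Liouville form: along any Hamiltonian trajectory one has $\int_0^s L_G(\gamma,\dot\gamma)\,\d\sigma=\int_\gamma\lambda-sG$, the energy $G$ being constant in time (and symmetrically for $H$). The decisive algebraic input is the one isolated in the opening of this section: since $G$ and $H$ Poisson commute, each is a first integral of the other, so \emph{both} $G$ and $H$ are constant on any surface swept out jointly by the two flows. This is what will make the energy contributions cancel.

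First I would reduce the claimed equality to a single inequality. Fix $(x,z)$ and let $y^\ast$ realize the infimum on the left-hand side; attainment follows from lower semicontinuity of the action and uniform superlinearity, which forces $A_G^s(x,y)\to+\infty$ as $y\to\infty$. At such a minimizer, the final momentum of the incoming $G$-minimizer from $x$ to $y^\ast$ must equal the initial momentum of the outgoing $H$-minimizer from $y^\ast$ to $z$; this is the first-order (no-corner) condition for $y\mapsto A_G^s(x,y)+A_H^t(y,z)$, and it produces a single phase point $(y^\ast,p^\ast)\in T^*M$ through which both lifted trajectories pass.

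Next I would anchor a commuting square at this phase point. Set $\Psi(\sigma,\tau)=\F_G^{\sigma-s}\F_H^\tau(y^\ast,p^\ast)=\F_H^\tau\F_G^{\sigma-s}(y^\ast,p^\ast)$ on $[0,s]\times[0,t]$, the two expressions coinciding because the Hamiltonian flows commute. One checks that the four boundary edges project under $\pi$ to: the $G$-minimizer from $x$ to $y^\ast$, the $H$-minimizer from $y^\ast$ to $z$, and two opposite edges that are competitor curves from $x$ to some $y'$ (an $H$-trajectory) and from $y'$ to $z$ (a $G$-trajectory). Since $\partial_\sigma\Psi=X_G$ and $\partial_\tau\Psi=X_H$, the pullback of $\Omega$ is $\Omega(X_G,X_H)\,\d\sigma\wedge\d\tau=\{G,H\}\,\d\sigma\wedge\d\tau=0$, whence $\int_{\partial\Psi}\lambda=\int_\Psi\d\lambda=-\int_\Psi\Omega=0$. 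Converting each of the four $\lambda$-integrals into an action, the energy terms $sG$ and $tH$ attached to opposite edges cancel, because $G$ and $H$ are constant on the whole square, leaving
\begin{equation*}
A_G^s(x,y^\ast)+A_H^t(y^\ast,z)=I_H+I_G,
\end{equation*}
where $I_H$ (resp. $I_G$) is the action of the opposite $H$-edge from $x$ to $y'$ (resp. $G$-edge from $y'$ to $z$). Bounding $I_H\geq A_H^t(x,y')$ and $I_G\geq A_G^s(y',z)$, then passing to the infimum over the midpoint, gives $\inf_y A_G^s(x,y)+A_H^t(y,z)\geq\inf_y A_H^t(x,y)+A_G^s(y,z)$; exchanging the roles of $(G,s)$ and $(H,t)$ yields the reverse inequality and hence equality.

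The hard part will be the momentum-matching step: justifying that the optimal midpoint $y^\ast$ lifts to a genuine $C^1$ junction, i.e. that the two adjacent minimizers glue into a single phase point $(y^\ast,p^\ast)$ with no corner. This is where one must invoke the differentiability (or the sub/super-differential calculus) of the action functions $A_G^s(x,\cdot)$ and $A_H^t(\cdot,z)$ at their common minimizer, a standard but delicate ingredient of weak KAM theory. Everything afterwards—Stokes' theorem, the vanishing of $\int_\Psi\Omega$ from $\{G,H\}=0$, and the cancellation of the energy terms from $G,H$ being first integrals of one another—is forced by the commutation hypothesis and amounts to careful bookkeeping.
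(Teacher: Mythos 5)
Your proposal is correct and follows essentially the same route as the paper's own proof: attainment of the infimum plus the no-corner condition yielding the common phase point $(y^\ast,p^\ast)$, the commuting square $\Psi(\sigma,\tau)=\F_G^{\sigma-s}\circ\F_H^{\tau}(y^\ast,p^\ast)$, Stokes' theorem with $\psi^*\Omega=\{G,H\}\,\d\sigma\wedge\d\tau=0$, cancellation of the energy terms because each Hamiltonian is a first integral of the other, and then one inequality plus symmetry. The step you flag as delicate (momentum matching at the minimizer) is handled in the paper exactly as you anticipate, via local semi-concavity of the action functionals and the identification of their partial derivatives with Legendre transforms of the minimizers' endpoint velocities.
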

\begin{proof}
Let us begin by recalling that the action functionals are locally semi-concave functions (see \cite{Fa} or \cite{Be2}) and therefore, if $(x_0,z_0)\in M^2$ and if $y_0$ reaches the infimum (which is always the case for some $y_0$) in the following:
$$\inf_{y\in M} A_G^s(x_0,y)+A_H^t(y,z_0)=A_G^s(x_0,y_0)+A_H^t(y_0,z_0),$$
then the following is verified
\begin{equation}\label{EL}
\frac{\partial A_G^s}{\partial y}(x_0,y_0)+\frac{\partial A_H^t}{\partial x}(y_0,z_0)=0
\end{equation}
and the partial derivatives do exist. Actually, more can be said. Let $\gamma_1$ and $\gamma_2$ verify that $\gamma_1(0)=x_0$, $\gamma_1(s)=y_0$, $\gamma_2(0)=y_0$, $\gamma_2(t)=z_0$ and
$$A_G^s(x_0,y_0)+A_H^t(y_0,z_0)=\int_0^s L_G(\gamma_1(\sigma),\dot{\gamma}_1(\sigma))\d\sigma+\int_0^t L_H(\gamma_2(\sigma),\dot{\gamma}_2(\sigma))\d\sigma.$$
The following holds (see \cite{Fa} proposition 4.11.1, \cite{fatfig07}  corollary B.20 or \cite{Be2}):
$$\left( y_0,\frac{\partial A_G^s}{\partial y}(x_0,y_0)\right)=\L_G (y_0,\dot{\gamma}_1(s))$$ 
and
$$\left( y_0 ,-\frac{\partial A_H^t}{\partial x}(y_0,z_0)\right)=\L_H (y_0,\dot{\gamma}_2(0)).$$
Finally, using the fact that the $\gamma_i$ are  minimizers hence trajectories of the respective Euler-Lagrange flows and (\ref{EL}), setting
$$p_0=\frac{\partial A_G^s}{\partial y}(x_0,y_0)=-\frac{\partial A_H^t}{\partial x}(y_0,z_0)$$
 we obtain that:
\begin{equation}
\forall \sigma\in [0,s],\  \L_G (\gamma_1(\sigma),\dot{\gamma}_1(\sigma))=\L_G(\f_G^{\sigma -s}(y_0,\dot{\gamma}_1(s)))=\F_G^{\sigma-s}(y_0,p_0),
\label{sup}
\end{equation}
\begin{equation}
\forall \sigma\in [0,t],\  \L_H (\gamma_2(\sigma),\dot{\gamma}_2(\sigma))=\L_H(\f_H^{\sigma}(y_0,\dot{\gamma}_2(0)))=\F_H^{\sigma}(y_0,p_0).
\label{sup2}
\end{equation}
Using the definition of $L_G$ we have
\begin{multline*}
A_G^s(x_0,y_0)
=\int_0^s L_G (\gamma_1(\sigma),\dot{\gamma}_1(\sigma)))\d\sigma\\
=\int_0^s \frac{\partial L_G}{\partial v}(\gamma_1(\sigma),\dot{\gamma}_1(\sigma))\dot{\gamma}_1(\sigma)\d\sigma-\int_0^s G\left(\gamma_1(\sigma), \frac{\partial L_G}{\partial v}(\gamma_1(\sigma),\dot{\gamma}_1(\sigma))\right)\d\sigma.
\end{multline*}
We now recognize in the first integral the image of the Hamiltonian vector-field under the Liouville form. Therefore, also using \ref{sup}, that Lagrangian and Hamiltonian flows are conjugated by the Legendre transform and that $G$ is constant on its Hamiltonian trajectories, we get that
\begin{multline*}
A_G^s(x_0,y_0)\\
=\int_0^s \frac{\partial L_G}{\partial v}(\gamma_1(\sigma),\dot{\gamma}_1(\sigma))\dot{\gamma}_1(\sigma)\d\sigma-\int_0^s G\left(\gamma_1(\sigma), \frac{\partial L_G}{\partial v}(\gamma_1(\sigma),\dot{\gamma}_1(\sigma))\right)\d\sigma
\\
=\int_0^s \lambda (X_G (\F_G^{\sigma-s}(y_0,p_0)))d\sigma-\int_0^s G\left(\F_G^{\sigma-s}(y_0,p_0)\right)\d\sigma\\
=\int_0^s \lambda (X_G (\F_G^{\sigma-s}(y_0,p_0)))\d\sigma-sG(y_0,p_0).
\end{multline*}
Reasoning along the same lines yields similarly that
\begin{equation*}
A_H^t(y_0,z_0)=\int_0^t \lambda (X_H (\F_H^{\sigma}(y_0,p_0)))\d\sigma-tH(y_0,p_0).
\end{equation*}
Summing up, we have proved that
\begin{multline}\label{sum}
\inf_{y\in M} A_G^s(x_0,y)+A_H^t(y,z_0)\\
=\int_0^s \lambda (X_G (\F_G^{\sigma-s}(y_0,p_0)))\d\sigma+\int_0^t \lambda (X_H (\F_H^{\sigma}(y_0,p_0)))\d\sigma\\
-sG(y_0,p_0)-tH(y_0,p_0).
\end{multline}
Now, let us set $(y_1,p_1)=\F_H^t\circ \F_G^{-s}(y_0,p_0)$. we define 
$$\forall \sigma \in[0,s],\  \gamma_3(\sigma)=\pi _1(\F_G^{\sigma}(y_1,p_1))$$
and
$$\forall \sigma \in[0,t], \  \gamma_4(\sigma)=\pi_1(\F_H^{\sigma-t}(y_1,p_1)),$$
where $\pi_1:T^*M\to M$ denotes the canonical projection on the manifold. 
First of all, let us notice that since $G$ and $H$ Poisson commute, then their Hamiltonian vector-fields also commute, which means that the Hamiltonian flows commute. As a direct consequence, we have that $\gamma_3(s)=z_0$. Moreover, it is obvious from the definitions that $\gamma_4(0)=x_0$.
Let us now compute the quantity $A$ defined below. The same arguments as those exposed previously give that
\begin{multline*}
A=\int_0^tL_H\left(\gamma_4(\sigma),\dot{\gamma}_4(\sigma)\right)\d\sigma+\int_0^sL_G\left(\gamma_3(\sigma),\dot{\gamma}_3(\sigma)\right)\d\sigma\\
=\int_0^t \lambda (X_H (\F_H^{\sigma-t}(y_1,p_1)))\d\sigma+\int_0^s \lambda (X_G (\F_G^{\sigma}(y_1,p_1)))\d\sigma\\
-tH(y_1,p_1)-sG(y_1,p_1).
\end{multline*}
Since $G$ and $H$ commute, they are respectively first integral of the other which proves that  $G(y_0,p_0)=G(y_1,p_1)$ and that $H(y_0,p_0)=H(y_1,p_1)$.
Now let us consider the function $\psi$ defined from $ R=[0,s]\times [0,t] \subset \R{2}$ to $T^*M$ by
 $$\psi(\sigma,\sigma')=\F_G^{\sigma-s}\circ \F_H^{\sigma'}(y_0,p_0).$$
 Using Stokes' formula, the following holds:
 \begin{multline}\label{sto}
 \int_0^s \lambda (X_G (\F_G^{\sigma-s}(y_0,p_0)))\d\sigma+\int_0^t \lambda (X_H (\F_H^{\sigma}(y_0,p_0)))\d\sigma\\
 -\int_0^t \lambda (X_H (\F_H^{\sigma-t}(y_1,p_1)))\d\sigma-\int_0^s \lambda (X_G (\F_G^{\sigma}(y_1,p_1)))\d\sigma\\
 =\int_{\partial R}\psi^*\lambda
 =\int_R \psi^*\d\lambda
 =-\int_R \psi^*\Omega =0.
 \end{multline}
As a matter of fact, $\Omega$ vanishes identically on the tangent space to $\psi(R)$ which is at each point spanned by $X_G$ and $X_H$.\\
To put it all in a nutshell, we have proved that
\begin{multline*}
A=\int_0^tL_H\left(\gamma_4(\sigma),\dot{\gamma}_4(\sigma)\right)\d\sigma+\int_0^sL_G\left(\gamma_3(\sigma),\dot{\gamma}_3(\sigma)\right)\d\sigma\\
=\int_0^s \lambda (X_G (\F_G^{\sigma-s}(y_0,p_0)))\d\sigma+\int_0^t \lambda (X_H (\F_H^{\sigma}(y_0,p_0)))\d\sigma\\
 -tH(y_1,p_1)-sG(y_1,p_1)\\
=\int_0^s L_G(\gamma_1(\sigma),\dot{\gamma}_1(\sigma))\d\sigma+\int_0^t L_H(\gamma_2(\sigma),\dot{\gamma}_2(\sigma))\d\sigma\\
=A_G^s(x_0,y_0)+A_H^t(y_0,z_0)=\inf_{y\in M} A_G^s(x_0,y)+A_H^t(y,z_0).
\end{multline*}
Now, by the definition of the action functionals, the following inequality clearly holds:
$$\inf_{y\in M} A_G^s(x_0,y_0)+A_H^t(y_0,z_0)\geqslant \inf_{y\in M} A_H^t(x_0,y_0)+A_G^s(y_0,z_0).$$
By a symmetrical argument, the previous inequality is in fact an equality, which proves the lemma since $(x_0,z_0)$ was taken arbitrarily.
\end{proof}
The proof of the theorem is now straightforward:
\begin{proof}[proof of \ref{com}]
Let $u: M\to \mathbb{R}\cup \{-\infty,+\infty\}$ be any  function. By definition of the Lax-Oleinik semi-groups if $x\in M$ is a point, the following equalities hold:
\begin{multline*}
T_G^{-s}T_H^{-t}u(x)=\inf_{z\in M} \inf_{y\in M} u(z)+A_H^{t}(z,y)+A_G^{s}(y,x)\\
=\inf_{z\in M} \inf_{y\in M} u(z)+A_G^{s}(z,y)+A_H^{t}(y,x)
=
T_H^{-t}T_G^{-s}u(x).
\end{multline*}
\end{proof}
%
%
%
\section{Subsolutions and weak KAM solutions}

We explain here how most of the theory of subsolutions and viscosity solutions of the Hamilton-Jacobi equation can be adapted to the setting of two commuting Hamiltonians. Our presentation is mainly adapted from \cite{FaMa}. 
\\
Until now, in order to prove the commutation of the Lax-Oleinik semi-groups in its full generality (\ref{com}) , we did not assume any regularity or growth condition on the functions $u$ on which the semi-groups act. As a counterpart, we had to consider functions taking values in $\mathbb{R} \cup \{-\infty, +\infty\}$. As a matter of fact, the image of any real valued function by the Lax-Oleinik semi-group of a Tonelli Hamiltonian may have infinite values. Let us stress the fact that from now on, we will only be dealing with globally bounded functions or sub-solutions which in particular are globally Lipschitz. It is known that starting with a globally bounded or a Lipschitz function, $u:M\to \R{}$, the families of functions $(T^{-s}_G u)_{s\geqslant 0}$ and $(T^{-t}_H u)_{t\geqslant 0}$ are real valued functions. Moreover, it can be proved that when $u$ is Lipschitz, they are families of equi-Lipschitz functions (see \cite{FaMa} Proposition 3.2). 
\\
 Let us recall that if $\alpha$ is a real number, we say that $u:M\to\mathbb{R}$ is an $(\alpha,G)$-subsolution if
$$\forall (x,y,t)\in M^2\times \mathbb{R}_+,\  u(y)-u(x)\leqslant A_G^t(x,y)+t\alpha.$$
We denote by $\HH_G(\alpha)$ the set of $(\alpha,G)$-subsolutions. Of course we can also define analogously $(\alpha,H)$-subsolutions and we will denote by $\HH_H(\alpha)$ the set of such functions.  Finally, if $(\alpha,\alpha')\in \R{2}$, we will denote by
 $$\HH(\alpha,\alpha')=\HH_G(\alpha)\cap \HH_H(\alpha').$$
 Since $G$ and $H$ are Tonelli, for $\alpha$ and $\alpha'$ big enough, constant functions are both $(\alpha,G)$-subsolutions and $(\alpha',H)$-subsolutions, hence the set $\HH(\alpha,\alpha')$ is not empty. As a matter of fact, it follows from the Tonelli hypothesis on the Hamiltonians that the associated Lagrangians are also uniformly superlinear (see \cite{FaMa} lemma 2.1.) hence bounded below. Therefore there is a constant $C$ such that
 $$\forall (x,v)\in TM,\ \min(L_G(x,v),L_H(x,v))\geqslant C.$$
 Hence, for any absolutely continuous curve $\gamma: [0,t]\to M$ the following inequality holds:
 $$\int_0^t L_G(\gamma(\sigma),\dot{\gamma}(\sigma))\d\sigma\geqslant tC$$
 which may be rewritten as follows  
 $$0\leqslant \int_0^t L_G(\gamma(\sigma),\dot{\gamma}(\sigma))\d\sigma- tC.$$
 This implies directly that constant functions on $M$ are $(-C,G)$-subsolutions
 and obviously the same holds for $L_H$. 
  \\
 If $\alpha\in \R{}$, following Fathi, we say a function $u:M\to \R{}$ is a (negative) $(\alpha,G)$-weak KAM solution if 
 $$\forall t\geqslant 0,\  u=T_G^{-t}u +t\alpha.$$
 We denote by $\s_G^-(\alpha)$ the set of $(\alpha,G)$-weak KAM solutions. Obviously, we define analogously the notion of $(\alpha,H)$-weak KAM solution and the set $\s_H^-(\alpha)$.
 Let us now state Fathi's weak KAM theorem (we state it for $G$) (see \cite{Fa} for a proof in the compact case and \cite{FaMa} for a proof in the non compact case).
 \begin{Th}[weak KAM]
 There is a constant $\alpha_G[0]$ such that $\HH_G(\alpha[0])$ is not empty and if $\alpha<\alpha_G[0]$ then $\HH_G(\alpha)$ is empty. Moreover, the set $\s_G^-(\alpha_G[0])$ is not empty, that is:
 $$\exists u_-:M\to \R{},\ \forall s\geqslant 0,\  u_-=T_G^{-s}u_-+s\alpha_G[0].$$
 \end{Th}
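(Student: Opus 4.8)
The plan is to treat the two assertions separately: the existence and characterisation of the critical constant $\alpha_G[0]$, and then the existence of a weak KAM solution for that constant.

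For the first assertion I would set $\alpha_G[0]=\inf\{\alpha\in\R{}:\HH_G(\alpha)\neq\emptyset\}$ and prove three things. First, the set of admissible $\alpha$ is upward closed, since replacing $\alpha$ by $\alpha'\geqslant\alpha$ only weakens the inequality $u(y)-u(x)\leqslant A_G^t(x,y)+t\alpha$; so this set is a half-line. Second, it is non-empty and bounded below: the computation already made in the text shows that constants belong to $\HH_G(-C)$, where $C$ is a uniform lower bound for $L_G$, while testing the subsolution inequality on closed curves (take $x=y$) yields $0\leqslant A_G^t(x,x)+t\alpha$, and since $A_G^t(x,x)\geqslant tC$ this forces every admissible $\alpha$ to satisfy $\alpha\geqslant\sup_{x,t}\big(-\tfrac1t A_G^t(x,x)\big)>-\infty$. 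Third, $\alpha_G[0]$ is itself admissible: choosing $\alpha_n\searrow\alpha_G[0]$ with $u_n\in\HH_G(\alpha_n)$, normalising by $u_n(x_0)=0$, and using that subsolutions are equi-Lipschitz (hence equibounded on the compact $M$), Arzel\`a--Ascoli produces a uniformly convergent subsequence whose limit lies in $\HH_G(\alpha_G[0])$. This establishes non-emptiness of $\HH_G(\alpha_G[0])$ and emptiness below it.

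For the second assertion the natural objects are the normalised operators $\widehat{T}_t:=T_G^{-t}+t\alpha_G[0]$. I would first record their formal properties: they form a semi-group, they commute with the addition of constants, they are non-expansive for $\|\cdot\|_\infty$, they preserve $\HH_G(\alpha_G[0])$, and for every subsolution $u$ one has $\widehat{T}_t u\geqslant u$ (because $u(y)\leqslant\inf_x[u(x)+A_G^t(x,y)]+t\alpha_G[0]=\widehat{T}_t u(y)$); together with the equi-Lipschitz estimate quoted above, the orbit $(\widehat{T}_t u_0)_{t\geqslant0}$ of a critical subsolution $u_0$ is equi-Lipschitz. The key point is that $t\mapsto\widehat{T}_t u_0$ is then monotone increasing (by the semi-group relation and $\widehat{T}_s\,v\geqslant v$ applied to $v=\widehat{T}_t u_0$). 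Assuming this family is also bounded above, its monotone limit $u_-$ is finite, Lipschitz, and satisfies, for every $s\geqslant0$, $\widehat{T}_s u_-=\lim_t\widehat{T}_s\widehat{T}_t u_0=\lim_t\widehat{T}_{s+t}u_0=u_-$ by continuity of $\widehat{T}_s$; that is, $u_-=T_G^{-s}u_-+s\alpha_G[0]$ for all $s$, so $u_-\in\s_G^-(\alpha_G[0])$.

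The main obstacle is precisely the boundedness above of the increasing family $\widehat{T}_t u_0$, equivalently the finiteness of the Peierls barrier $\liminf_{t\to\infty}\big(A_G^t(x,y)+t\alpha_G[0]\big)>-\infty$ at the critical level. This is where compactness of $M$ and the exact choice of $\alpha_G[0]$ as the critical value must be used in an essential way: a priori nothing forbids $\widehat{T}_t u_0$ from diverging uniformly to $+\infty$. I would resolve it by producing, via compactness of the set of invariant probability measures on $TM$, a minimising (Mather) measure whose average Lagrangian equals $-\alpha_G[0]$; its recurrent support furnishes a point $x_0$ and times $t_n\to\infty$ along which $A_G^{t_n}(x_0,x_0)+t_n\alpha_G[0]$ stays bounded, whence $\widehat{T}_{t_n}u_0(x_0)$ is bounded and, by monotonicity and equi-Lipschitzness, so is the whole family. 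An alternative that avoids the monotone limit is to apply the Schauder--Tychonoff theorem to the non-expansive map induced by $\widehat{T}_t$ on the compact convex set of classes of equi-Lipschitz functions in $C^0(M,\R{})/\R{}$, obtaining a fixed point for each $t$ and then showing both that the residual additive constant vanishes and that the single-time fixed point can be promoted to one common to all times; but this route carries its own difficulty and the heart of the matter remains the same, namely extracting a genuine fixed point at exactly the critical level from a merely non-expansive semi-group.
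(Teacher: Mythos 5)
Your first half (defining $\alpha_G[0]$ as the infimum of the admissible constants, showing the admissible set is a half-line bounded below, and extracting an element of $\HH_G(\alpha_G[0])$ by Arzel\`a--Ascoli from a sequence $u_n\in\HH_G(\alpha_n)$, $\alpha_n\searrow\alpha_G[0]$) is sound, at least for compact $M$ (note the statement is also quoted in the paper for non-compact complete $M$, where you would need local uniform convergence instead). The second half, however, has a genuine gap, and it sits exactly where you say the heart of the matter is. Your resolution of the boundedness of $t\mapsto T_G^{-t}u_0+t\alpha_G[0]$ fails on two counts. First, you invoke a minimizing measure \emph{whose average Lagrangian equals} $-\alpha_G[0]$: the existence of a measure minimizing $\mu\mapsto\int L_G\,\d\mu$ is easy, but the identity $\min_\mu\int L_G\,\d\mu=-\alpha_G[0]$, with $\alpha_G[0]$ defined through subsolutions, is a theorem of essentially the same depth as the one you are proving. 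The inequality $\int L_G\,\d\mu\geqslant-\alpha$ for every invariant $\mu$ and admissible $\alpha$ is the easy half; the reverse inequality (existence of subsolutions at the level $-\min_\mu\int L_G\,\d\mu$) is in most treatments \emph{deduced from} the weak KAM theorem, or obtained via Fenchel--Rockafellar duality together with Ma\~n\'e's theorem that minimizing closed measures are invariant--so as written your argument is circular or at best incomplete. Second, even granting this identity, Birkhoff's theorem plus Poincar\'e recurrence only give $A_G^{t_n}(x_0,x_0)+t_n\alpha_G[0]=o(t_n)$ along return times, not $O(1)$: a zero-mean additive cocycle over an ergodic flow can drift to $+\infty$ sublinearly. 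To get actual boundedness (finiteness of the Peierls barrier at some point) one needs Atkinson's recurrence lemma for zero-mean Birkhoff sums, which is precisely the tool Ma\~n\'e and Mather use to show that supports of minimizing measures are static; your proposal never supplies it.

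The route you dismiss as "carrying its own difficulty" is in fact the standard one, and it is the one the paper relies on: the theorem itself is quoted from Fathi, but the paper's proof of the double weak KAM theorem (\ref{dou}) reproduces the argument, with DeMarr's theorem in place of a single fixed point theorem. On the quotient $\HHH_G(\alpha)=\HH_G(\alpha)/\R{}\mathbbm{1}$, compact and convex by equi-Lipschitzness and Arzel\`a--Ascoli, the maps induced by $T_G^{-t}$ are non-expansive, so Schauder--Tychonoff gives for each $t$ a fixed point modulo constants; the fixed point sets $F_n$ of $T_G^{-1/2^n}$ are nested nonempty compacts (a fixed point of $T_G^{-1/2^{n+1}}$ is one of $T_G^{-1/2^n}$), and any $u$ in their intersection satisfies $u=T_G^{-t}u+c(t)$ for all dyadic, hence all, $t$, with $c$ additive and continuous, hence $c(t)=\beta t$. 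The level is then identified \emph{a posteriori}: if $v\in\HH_G(\alpha)$, then pointwise $u-v\geqslant T_G^{-t}u-T_G^{-t}v+t(\beta-\alpha)\geqslant-\|u-v\|_\infty+t(\beta-\alpha)$, so $2\|u-v\|_\infty\geqslant t(\beta-\alpha)$ for all $t$, forcing $\beta\leqslant\alpha$; since $u$ itself lies in $\HH_G(\beta)$, we get $\beta=\alpha_G[0]$. No Peierls barrier and no Mather measures enter, and your worry about "extracting a fixed point at exactly the critical level" dissolves, because the level is not prescribed in advance but recognized afterwards by this soft comparison.
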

 \begin{rem}\label{comm}\rm
 Let us mention that $\alpha_G[0]$ is called Ma\~ n\' e's critical value, therefore we call an $(\alpha_G[0],G)$-subsolution a $G$-critical subsolution. Moreover we will set $\s_G^-(\alpha_G[0])=\s_G^-$. If $M$ is compact, this notation is very natural, for if a function $u$ is an $(\alpha,G)$-weak KAM solution for some $\alpha$ then $\alpha=\alpha_G[0]$. However, let us stress that if $M$ is not compact, then as soon as $\alpha\geqslant \alpha_G[0]$ then $\s_G^-(\alpha)$ is not empty. As a matter of fact, using the Ma\~n\'e potential, it is possible to construct weak KAM solutions using a method inspired by the construction of Busemann functions in Riemannian geometry  (see \cite{Fa} corollary 8.2.3).
 \end{rem}
 Let us begin with an easy lemma:
 \begin{lm}\label{sta}
 The following assertions are true:
 \begin{enumerate}
 \item Let $u:M\to \R{}$ be a real valued function and $\alpha\in \R{}$, then $u\in  \HH_G(\alpha)$ if and only if
 $$\forall s\geqslant 0,\  u\leqslant T_G^{-s} u+s\alpha.$$
 \item For any $t\geqslant 0$ and $\alpha \in \R{}$, the set $\HH_G(\alpha)$ is stable by $T_H^{-t}$.
 \item For any $t\geqslant 0$ and $\alpha \in \R{}$, the set $\s^-_G(\alpha)$ is stable by $T_H^{-t}$.
 \end{enumerate}
 \end{lm}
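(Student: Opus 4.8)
The three assertions concern the interplay between the subsolution/weak KAM conditions for $G$ and the action of the $H$-semigroup. The natural strategy is to prove them in order, since each builds on the previous one, and to lean heavily on the commutation Lemma \ref{com1} (equivalently Theorem \ref{com}) together with the semigroup property of $T_G^{-s}$ and $T_H^{-t}$. For assertion (1), I would unwind the definitions: $u\in\HH_G(\alpha)$ means $u(y)-u(x)\leqslant A_G^s(x,y)+s\alpha$ for all $x,y$ and all $s\geqslant 0$. Fixing $y$ and $s$ and taking the infimum over $x$ (against the weight $u(x)$) on the right-hand side, the condition $u(y)\leqslant\inf_x\bigl(u(x)+A_G^s(x,y)\bigr)+s\alpha$ is exactly $u(y)\leqslant T_G^{-s}u(y)+s\alpha$. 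The two formulations are thus equivalent essentially by the definition of $T_G^{-s}$ as an infimum over curves factoring through an intermediate point.

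For assertion (2), I would take $u\in\HH_G(\alpha)$ and show $T_H^{-t}u\in\HH_G(\alpha)$ using the characterization from (1), namely that I must check $T_H^{-t}u\leqslant T_G^{-s}\bigl(T_H^{-t}u\bigr)+s\alpha$ for every $s\geqslant 0$. Here the commutation property is decisive: $T_G^{-s}T_H^{-t}u=T_H^{-t}T_G^{-s}u$ by Theorem \ref{com}. Since $u\leqslant T_G^{-s}u+s\alpha$ by (1), and since $T_H^{-t}$ is monotone (it is an infimum of an expression that is nondecreasing in $u$) and commutes with the addition of the constant $s\alpha$ (because $A_H^t$ is unaffected and the constant simply factors through the infimum), applying $T_H^{-t}$ to the inequality $u\leqslant T_G^{-s}u+s\alpha$ gives $T_H^{-t}u\leqslant T_H^{-t}\bigl(T_G^{-s}u\bigr)+s\alpha=T_G^{-s}\bigl(T_H^{-t}u\bigr)+s\alpha$, which is precisely what (1) requires. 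So (2) reduces to monotonicity of $T_H^{-t}$, its commutation with constants, and the semigroup commutation.

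For assertion (3), I would take $u\in\s_G^-(\alpha)$, so $u=T_G^{-s}u+s\alpha$ for all $s\geqslant 0$, and verify that $T_H^{-t}u$ satisfies the same fixed-point identity. Applying $T_G^{-s}$ to $T_H^{-t}u$ and using commutation, $T_G^{-s}\bigl(T_H^{-t}u\bigr)=T_H^{-t}\bigl(T_G^{-s}u\bigr)$; since $T_G^{-s}u=u-s\alpha$, and since $T_H^{-t}$ commutes with subtraction of the constant $s\alpha$, this equals $T_H^{-t}u-s\alpha$. Rearranging yields $T_H^{-t}u=T_G^{-s}\bigl(T_H^{-t}u\bigr)+s\alpha$ for all $s\geqslant 0$, which is exactly the statement that $T_H^{-t}u\in\s_G^-(\alpha)$.

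**Main obstacle.** None of the three steps involves a genuine difficulty; the proof is essentially a formal manipulation once Theorem \ref{com} is in hand. The only points requiring a little care are the two structural properties of $T_H^{-t}$ used implicitly throughout: its monotonicity (if $u\leqslant v$ pointwise then $T_H^{-t}u\leqslant T_H^{-t}v$) and the fact that it intertwines with the addition of constants, $T_H^{-t}(u+c)=T_H^{-t}u+c$ for $c\in\R{}$. Both follow immediately from the defining infimum, but they must be invoked. The one substantive ingredient that does real work is the commutation of the semigroups, Theorem \ref{com}, which has already been established; the present lemma is best seen as a direct corollary of it combined with the elementary characterization proved in (1).
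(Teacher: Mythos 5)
Your proposal is correct and follows essentially the same route as the paper: part (1) by rewriting the subsolution inequality as an infimum over the intermediate point, and parts (2) and (3) by applying $T_H^{-t}$ to the inequality (resp.\ identity) defining $\HH_G(\alpha)$ (resp.\ $\s_G^-(\alpha)$) and invoking monotonicity, commutation with constants, and Theorem \ref{com}. The paper's proof is exactly this formal manipulation, with the same ingredients made explicit.
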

 \begin{proof}
 The first part holds because by definition, $u$ is an $(\alpha,G)$-subsolution if and only if
 $$\forall (x,y,s)\in M^2\times \R{}_+,\  u(x)-u(y)\leqslant A_G^s(y,x)+s\alpha,$$
 which by taking an infimum on $y$ is equivalent to
 $$\forall (x,s)\in M\times \R{}_+,\  u(x)\leqslant \inf_{y\in M}u(y)+A_G^s(y,x)+s\alpha=T_G^{-s}u(x)+s\alpha.$$
 For the second part, by monotonicity of the Lax-Oleinik semi-group, using \ref{com}, we obtain that if $u\leqslant T_G^{-s} u+s\alpha$ then 
 $$T_H^{-t}u\leqslant T_H^{-t}(T_G^{-s} u+s\alpha)=T_H^{-t}(T_G^{-s} u)+s\alpha=T_G^{-s}(T_H^{-t} u)+s\alpha.$$
 The last point is a straightforward consequence of the commutation property of the semi-groups (\ref{com}). If for any positive $s$,  
$u= T_G^{-s} u+s\alpha$ then 
 $$T_H^{-t}u= T_H^{-t}(T_G^{-s} u+s\alpha)=T_H^{-t}(T_G^{-s} u)+s\alpha=T_G^{-s}(T_H^{-t} u)+s\alpha.$$

 \end{proof}

We now prove a version of the weak KAM theorem for commuting Hamiltonians when the manifold $M$ is compact. The proof is very similar to the proof of the classical weak KAM theorem (see \cite{Fa} or \cite{FaMa}). Let us recall that in the compact case, the Lax-Oleinik semi-groups are non-expansive for the infinity norm (\cite{Fa} proposition 4.6.5). This is in fact important since it will enable us to apply the following theorem of DeMarr (\cite{demarr}):
\begin{Th}[DeMarr]
Let $B$ be a Banach space and $(f_a)_{a\in A}$ a  family of commuting non-expansive continuous functions on $B$ which preserve a compact convex subset $C\subset B$, then these semi-groups have a common fixed point in $C$.
\end{Th}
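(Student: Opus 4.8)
The plan is to locate a common fixed point inside a \emph{minimal} invariant set produced by Zorn's lemma, and then to show by a Chebyshev-center computation that minimality forces this set to reduce to a single point. Write the family as $(f_a)_{a\in A}$ and consider the collection $\Sigma$ of all nonempty compact convex subsets $K\subseteq C$ that are invariant under every member of the family, i.e.\ $f_a(K)\subseteq K$ for all $a$. This $\Sigma$ contains $C$ itself and is partially ordered by reverse inclusion; the intersection of any chain is again nonempty (finite intersection property for compact sets), compact, convex and invariant, so Zorn's lemma furnishes an element $K_0\in\Sigma$ that is minimal for inclusion.

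The key point, and the only place where commutativity is used, is to upgrade the inclusion $f_a(K_0)\subseteq K_0$ into the identity $\overline{\mathrm{co}}\,f_a(K_0)=K_0$ for each fixed $a$. Indeed, set $K_0'=\overline{\mathrm{co}}\,f_a(K_0)$, a nonempty compact convex subset of $K_0$. For any other index $b$, commutativity together with nonexpansiveness gives
$$ f_b(K_0')\subseteq \overline{\mathrm{co}}\,f_bf_a(K_0)=\overline{\mathrm{co}}\,f_af_b(K_0)\subseteq \overline{\mathrm{co}}\,f_a(K_0)=K_0', $$
so $K_0'\in\Sigma$, and minimality of $K_0$ forces $K_0'=K_0$.

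It then remains to prove that $K_0$ is a singleton. Set $d=\diam K_0$ and suppose $d>0$. Let $r=\inf_{z\in K_0}\sup_{x\in K_0}\|z-x\|$ be the Chebyshev radius and let $A=\{z\in K_0:\sup_{x\in K_0}\|z-x\|\leqslant r\}$ be the associated center, which is nonempty, compact and convex (as a sublevel set of the continuous convex function $z\mapsto\sup_x\|z-x\|$ on a compact set). For $z\in A$ and any $a$, nonexpansiveness yields $\|f_a(z)-f_a(x)\|\leqslant r$ for all $x\in K_0$, so $f_a(K_0)$ lies in the closed ball $\bar B(f_a(z),r)$; since that ball is closed and convex, the previous paragraph gives $K_0=\overline{\mathrm{co}}\,f_a(K_0)\subseteq \bar B(f_a(z),r)$, whence $f_a(z)\in A$. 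Thus $A\in\Sigma$, and minimality forces $A=K_0$, meaning every point of $K_0$ is a center and therefore $d\leqslant r$.

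The main obstacle is to contradict this last inequality through \emph{normal structure}: I must show that a compact convex set with $d>0$ satisfies the strict inequality $r<d$. I plan to prove this elementary but essential lemma by a finite-net averaging argument. Fix $\e\in(0,d)$ and a finite $\e$-net $x_1,\dots,x_n$ of $K_0$, and put $z=\frac1n\sum_{i=1}^n x_i\in K_0$. Every $x\in K_0$ is within $\e$ of some $x_j$, so
$$ \|z-x\|\leqslant \tfrac1n\sum_{i=1}^n\|x_i-x\|\leqslant \tfrac1n\big((n-1)d+\e\big)=d-\tfrac{d-\e}{n}<d $$
uniformly in $x$; hence $r<d$, contradicting $d\leqslant r$. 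Therefore $d=0$, and the single point of $K_0$ is the desired common fixed point.
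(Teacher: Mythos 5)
The paper itself gives no proof of this statement (it is quoted from DeMarr's article \cite{demarr}), so your attempt must stand on its own. Its skeleton is indeed the classical one, and three of your four steps are fine: the Zorn minimalization, the invariance-plus-minimality argument for the Chebyshev center $A$, and the finite-net averaging proof that a compact convex set of diameter $d>0$ has Chebyshev radius $r<d$. The gap is exactly at the step you call the key point. The first inclusion of your chain, $f_b\bigl(\overline{\mathrm{co}}\,f_a(K_0)\bigr)\subseteq\overline{\mathrm{co}}\,\bigl(f_bf_a(K_0)\bigr)$, tacitly assumes that a nonexpansive map carries the closed convex hull of a set into the closed convex hull of its image. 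That is true for affine maps but false for nonexpansive ones, which need not respect convex combinations at all. For a concrete failure, take $u=(-1,0)$, $v=(1,0)$ and their midpoint $m=(0,0)$ in $\R{2}$, and let $g(u)=(0,0)$, $g(v)=(0,1)$, $g(m)=(\nicefrac12,\nicefrac12)$; one checks that $g$ is nonexpansive on these three points, so by Kirszbraun's theorem it extends to a nonexpansive self-map of $\R{2}$, and yet $g(m)$ does not lie on the segment $\overline{\mathrm{co}}\,g(\{u,v\})$. Since this inclusion is, by your own account, the only place where commutativity enters, the identity $\overline{\mathrm{co}}\,f_a(K_0)=K_0$ is not established, and the final contradiction collapses.

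The repair --- which is DeMarr's own argument --- keeps your structure but inserts a second minimalization that avoids convex hulls at the point where commutativity is used. Inside $K_0$, apply Zorn's lemma again to the collection of nonempty compact, \emph{not necessarily convex}, subsets invariant under the whole family, and let $X_0$ be a minimal one. For each $a$ the set $f_a(X_0)$ is nonempty, compact and invariant, because commutativity gives $f_b(f_a(X_0))=f_a(f_b(X_0))\subseteq f_a(X_0)$ with no hulls taken; minimality of $X_0$ then yields $f_a(X_0)=X_0$. Now run your Chebyshev argument relative to $X_0$: if $d=\diam X_0>0$, your averaging lemma (with the finite net taken in $X_0$ and the average lying in $\overline{\mathrm{co}}\,X_0\subseteq K_0$) gives a point $u\in K_0$ with $r=\sup_{x\in X_0}\|u-x\|<d$; the set $A=\{z\in K_0:\ \sup_{x\in X_0}\|z-x\|\leqslant r\}$ is nonempty, compact and convex, and it is invariant because for $z\in A$ and any $x\in X_0$ we may write $x=f_a(x')$ with $x'\in X_0$, so that $\|f_a(z)-x\|\leqslant\|z-x'\|\leqslant r$. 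Minimality of $K_0$ forces $A=K_0\supseteq X_0$, whence $\diam X_0\leqslant r<d$, a contradiction. So $X_0$ is a singleton, and its unique point is the desired common fixed point.
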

\begin{Th}[double weak KAM]\label{dou}
 Let us assume $M$ is compact. There is a function $u_- :M\mapsto \R{}$ which is both a weak KAM solution for $G$ and $H$.
\end{Th}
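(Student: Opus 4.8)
The plan is to apply DeMarr's fixed point theorem to the two Lax-Oleinik semi-groups acting on a suitable Banach space. First I would work in the Banach space $B=C^0(M,\R{})$ of continuous functions on the compact manifold $M$, equipped with the sup-norm. The natural candidate family of maps is $\{T_G^{-s}+s\alpha_G[0]\}_{s\geqslant 0}\cup\{T_H^{-t}+t\alpha_H[0]\}_{t\geqslant 0}$, the normalised semi-groups; a common fixed point $u_-$ of this whole family is precisely a function satisfying $u_-=T_G^{-s}u_-+s\alpha_G[0]$ and $u_-=T_H^{-t}u_-+t\alpha_H[0]$ for all $s,t\geqslant 0$, which is exactly a function that is simultaneously a weak KAM solution for $G$ and for $H$, as desired.

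To run DeMarr I need to check its three hypotheses for this family. \emph{Commutation} is immediate from Theorem \ref{com}: since $T_G^{-s}T_H^{-t}=T_H^{-t}T_G^{-s}$, adding the constants $s\alpha_G[0]$ and $t\alpha_H[0]$ does not affect commutation, and within each family the semi-group property gives commutation for free. \emph{Non-expansiveness} for the sup-norm holds for each $T_G^{-s}$ and $T_H^{-t}$ in the compact case (this is the cited Proposition 4.6.5 of \cite{Fa}), and translating by a constant is an isometry, so the normalised maps remain non-expansive. The maps are also continuous on $B$. The remaining, and most delicate, ingredient is producing a nonempty compact convex subset $C\subset B$ that is preserved by \emph{every} map in the family simultaneously.

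The hard part will be constructing this common invariant compact convex set $C$. My plan is to build it from a subsolution that works for both Hamiltonians. By the weak KAM theorem there exist a $G$-critical subsolution and an $H$-critical subsolution; more to the point, I would invoke the machinery of Section 2 (the stability statements of Lemma \ref{sta} and the nonemptiness of $\HH(\alpha_G[0],\alpha_H[0])$, together with Theorem \ref{subs} producing a common subsolution) to fix a single function $w$ that is both a $G$-critical and an $H$-critical subsolution. By part (1) of Lemma \ref{sta}, such a $w$ satisfies $w\leqslant T_G^{-s}w+s\alpha_G[0]$ and $w\leqslant T_H^{-t}w+t\alpha_H[0]$ for all times, so the orbit of $w$ under the normalised semi-groups is pointwise non-decreasing and, by equi-Lipschitzness of Lax-Oleinik images of a Lipschitz function on compact $M$, is uniformly bounded above (say by $w+K$ for a constant $K$). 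I would then take $C$ to be the closed convex hull of the full two-parameter orbit $\{(T_G^{-s}+s\alpha_G[0])(T_H^{-t}+t\alpha_H[0])\,w : s,t\geqslant 0\}$, intersected with the equi-Lipschitz, uniformly bounded band $\{u : w\leqslant u\leqslant w+K,\ \mathrm{Lip}(u)\leqslant \kappa\}$ for suitable $\kappa$.

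Verifying the properties of $C$ is where the care lies. Convexity and closedness are built in; compactness follows from Arzelà–Ascoli, since every element is $\kappa$-Lipschitz and uniformly bounded on the compact manifold $M$. Invariance is the crucial check: I must confirm that applying any $T_G^{-s'}+s'\alpha_G[0]$ or $T_H^{-t'}+t'\alpha_H[0]$ sends $C$ back into $C$. Monotonicity of the semi-groups together with $w\leqslant T_G^{-s}w+s\alpha_G[0]$ preserves the lower bound $w$; the equi-Lipschitz estimates preserve the band; and since the maps commute with each other by Theorem \ref{com}, they permute the generating orbit and hence preserve its closed convex hull. Once $C$ is shown to be a nonempty compact convex set invariant under the whole commuting non-expansive family, DeMarr's theorem yields a common fixed point $u_-\in C$, completing the proof. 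The subtle point to get right is ensuring a single band width $\kappa$ and $K$ works uniformly for \emph{all} maps at once, which is exactly what the equi-Lipschitz property of \cite{FaMa} Proposition 3.2 and the uniform control coming from the common subsolution $w$ are designed to provide.
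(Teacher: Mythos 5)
Your high-level tool is the right one (DeMarr applied to the commuting, non-expansive Lax-Oleinik semi-groups, with commutation supplied by Theorem \ref{com}), but the construction of the compact convex invariant set has two genuine gaps, the first of which is a circularity. You anchor everything on a function $w\in\HH(\alpha_G[0],\alpha_H[0])$, i.e.\ a \emph{common critical} subsolution. Nothing available before Theorem \ref{dou} provides such a $w$: the paper only establishes that $\HH(\alpha,\alpha')\neq\varnothing$ for $\alpha,\alpha'$ \emph{large enough} (constants work there), and Theorem \ref{subs} is purely a regularization statement --- it upgrades an already existing common subsolution to $C^{1,1}$, it does not create one at the critical pair. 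In fact, nonemptiness of $\HH(\alpha_G[0],\alpha_H[0])$ is essentially a consequence of the theorem you are trying to prove (a double weak KAM solution is such a function), so assuming it is circular. Nor can you sidestep this by running your argument at a large non-critical pair $(\alpha,\alpha')$ with maps normalized by $\alpha$: on compact $M$, an $(\alpha,G)$-weak KAM solution exists only for $\alpha=\alpha_G[0]$ (Remark \ref{comm}), so for $\alpha>\alpha_G[0]$ the normalized maps have no fixed point at all and no compact convex invariant set can exist.

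The second gap is the invariance of your set $C$. You argue that since the maps send the generating orbit into itself, they preserve its closed convex hull; this is false, because $T_G^{-s}$ and $T_H^{-t}$ are \emph{not affine} (they are infima of affine functionals, hence concave: $T_G^{-s}(\lambda u+(1-\lambda)v)\geqslant \lambda T_G^{-s}u+(1-\lambda)T_G^{-s}v$, with strict inequality in general). So the image of a convex combination of orbit points need not lie in the convex hull of the orbit. The band $\{w\leqslant u\leqslant w+K\}$ does not repair this: since every critical subsolution $u$ satisfies $u\leqslant T_G^{-s}u+s\alpha_G[0]$ (Lemma \ref{sta}), the normalized maps push elements \emph{upward}, so the upper constraint $u\leqslant w+K$ is not preserved either. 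The paper's proof resolves both difficulties with one move that your proposal is missing: take $(\alpha,\alpha')$ large so that $\HH(\alpha,\alpha')\neq\varnothing$, use the \emph{whole} set $\HH(\alpha,\alpha')$ --- which is convex by linearity of the subsolution inequalities and invariant by Lemma \ref{sta}, so no convex hull is needed --- and quotient by the constants $\R{}\mathbbm{1}$ to obtain compactness (equi-Lipschitz plus Arzel\`a-Ascoli). The un-normalized semi-groups descend to this quotient because they commute with addition of constants, and DeMarr then yields $u_-$ with $u_-=T_G^{-s}u_-+s\beta$ and $u_-=T_H^{-t}u_-+t\beta'$ for \emph{some} constants $\beta,\beta'$, which compactness identifies a posteriori with $\alpha_G[0],\alpha_H[0]$ (Remark \ref{ext}); the critical values are an output of the argument, not an input.
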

 \begin{proof}
 Take $(\alpha,\alpha')\in \R{2}$ such that $\HH(\alpha,\alpha')$ is not empty. It is known (\cite{FaMa}) that $\HH(\alpha,\alpha')$ is made of  equi-Lipschitz functions. Therefore, by the Arzela-Ascoli theorem, the set 
 $$\HHH(\alpha,\alpha')=\HH(\alpha,\alpha')/\R{}\mathbbm{1}$$
  is compact for the compact open topology (where $\mathbbm{1}$ denotes the function constantly equal to $1$ on $M$).  Moreover, since 
$ \HH(\alpha,\alpha')$ is convex, the same holds for $\HHH(\alpha,\alpha')$. Finally, since by \ref{sta}, $ \HH(\alpha,\alpha')$ is stable by  the semi-groups, which  commute with the addition of constants, they induce two semi-groups which still commute and leave  $ \HHH(\alpha,\alpha')$ stable. Since the Lax-Oleinik semi-groups are non-expansive, we can therefore apply  DeMarr's theorem for commutative families of non-expansive maps (\cite{demarr}):
\begin{eqnarray*}
\exists u_-\in \HH(\alpha,\alpha'),\ \exists (\beta,\beta')\in \R{2}&,&\forall s\geqslant 0,\  u_-=T_G^{-s}u_-+s\beta\\
&&\forall t\geqslant 0,\  u_-=T_H^{-t}u_-+t\beta'.
\end{eqnarray*}
The function $u_-$ is the double weak KAM solution we are looking for.
\end{proof}
\begin{rem}\label{ext}\rm
Since $M$ is compact, using \ref{comm}, we obtain that in the previous proof, $\beta = \alpha_G[0]$ and $\beta'=\alpha_H[0]$.
\end{rem}
Actually, in the compact case the link between weak KAM solutions for $G$ and $H$ is much more simple to understand due to the following theorem which is a reformulation of \ref{inva}:
\begin{Th}\label{inv}
If $M$ is compact and $u_-:M\to \R{}$ is a weak KAM solution for $G$ then it is  also a weak KAM solution for $H$: $u_-\in \s^-$. In short, the following equalities hold:
$$\s^-_G=\s^-_H=\s^-.$$
\end{Th}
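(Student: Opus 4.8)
The plan is to exploit the symmetry of the statement: since $G$ and $H$ play interchangeable roles, it suffices to prove the single inclusion $\s^-_G\subseteq\s^-_H$, and then exchanging the two Hamiltonians yields $\s^-_H\subseteq\s^-_G$, hence $\s^-_G=\s^-_H=:\s^-$. So fix $u_-\in\s^-_G$. The first observation is that, by Lemma \ref{sta}(3), for every $t\ge 0$ the function $\Psi_t(u_-):=T_H^{-t}u_-+t\alpha_H[0]$ again lies in $\s^-_G$; moreover $(\Psi_t)_{t\ge0}$ is a semigroup commuting with the addition of constants, so it descends to a non-expansive (for the sup norm) semigroup on the compact set $K:=\s^-_G/\R{}\mathbbm{1}$, compactness coming from equi-Lipschitzness and Arzel\`a--Ascoli in the compact case. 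The point of this reformulation is that proving $u_-\in\s^-_H$ is \emph{exactly} proving that $u_-$ is a fixed point of every $\Psi_t$: indeed $\Psi_t u_-=u_-$ for all $t$ says precisely $u_-=T_H^{-t}u_-+t\alpha_H[0]$, i.e. $u_-\in\s^-_H(\alpha_H[0])=\s^-_H$ (recall $\alpha_H[0]$ is forced by Remarks \ref{comm}--\ref{ext}).

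A first, soft, partial step is available. By Theorem \ref{dou} there is a common solution $u_0\in\s^-_G\cap\s^-_H$, which satisfies $\Psi_t u_0=u_0$. Non-expansiveness then gives $\|\Psi_{t+\tau}u_--u_0\|_\infty=\|\Psi_\tau(\Psi_t u_-)-\Psi_\tau u_0\|_\infty\le\|\Psi_t u_--u_0\|_\infty$, so $t\mapsto\|\Psi_t u_--u_0\|_\infty$ is non-increasing and the orbit of $u_-$ stays in a fixed sup-ball around $u_0$ and subconverges in $K$. This alone, however, only exhibits a fixed point in the closure of the orbit; it does not show that $u_-$ itself is fixed, so the gap must be closed geometrically.

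For the geometric step I would reuse the Stokes computation of Lemma \ref{com1}. Through a given $x\in M$, pick a backward $G$-calibrated curve of $u_-$; it lifts to the Hamiltonian orbit $\sigma\mapsto\F_G^{\sigma}(x,p_0)$ with $p_0=\d u_-(x)$, along which $G\equiv\alpha_G[0]$. Because $\{G,H\}=0$, the value $H$ is constant along this orbit and the two flows commute, so on the surface $\psi(\sigma,\sigma')=\F_G^{\sigma}\circ\F_H^{\sigma'}(x,p_0)$ the form $\Omega$ vanishes, its tangent being spanned by $X_G$ and $X_H$. Applying Stokes to $\psi$ exactly as in (\ref{sto}) equates the $G$- and $H$-actions along the sides of the rectangle, which lets me trade the given $G$-calibration of $u_-$ for an $H$-calibration: the curve obtained by flowing the lift with $\F_H$ and projecting by $\pi_1$ realizes the equality $u_-(x)=T_H^{-t}u_-(x)+t\alpha_H[0]$ at the point $x$.

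The hard part is controlling the constant. The Stokes argument only \emph{transfers} actions; to conclude I must know that the constant value $H(x,p_0)$ carried along the lifted calibrated curves equals the $H$-critical value $\alpha_H[0]$, equivalently that $u_-$ is an $(\alpha_H[0],H)$-subsolution, so that the $H$-calibration produced above sits at the correct energy level. Establishing this is the crux: I expect to use the commutation of the flows to show that the graph $\{(\gamma(\sigma),\d u_-(\gamma(\sigma)))\}$ swept by the $G$-calibrated curves, being a union of $\F_G$-orbits, is preserved by $\F_H$, together with the characterisation of $\alpha_H[0]$ as the unique level admitting $H$-critical subsolutions. Once the subsolution inequality $u_-\le T_H^{-t}u_-+t\alpha_H[0]$ (from Lemma \ref{sta}(1)) and the matching calibration are both secured, they combine to the equality $u_-=T_H^{-t}u_-+t\alpha_H[0]$, proving $u_-\in\s^-_H$ and, by symmetry, the theorem.
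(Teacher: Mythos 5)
Your reduction is sound as far as it goes: identifying membership in $\s^-_H$ with being a fixed point of $\Psi_t=T_H^{-t}(\cdot)+t\alpha_H[0]$, invoking Lemma \ref{sta}(3) to keep the orbit of $u_-$ inside $\s^-_G$, and using the double solution $u_0$ of Theorem \ref{dou} as an anchor are all ingredients the paper also uses (its proof studies $v_s=T_H^{-s}u_--T_H^{-s}u_0$). But you explicitly leave the decisive step open, and the route you sketch for it does not close. Your ``soft'' step, as you admit, only produces a fixed point in the closure of the orbit, not the fixedness of $u_-$ itself. Your ``geometric'' step then has two problems. First, the Stokes identity of Lemma \ref{com1} only equates \emph{actions} along the four sides of the rectangle; to convert a $G$-calibration of $u_-$ into an $H$-calibration you must compare the \emph{values} of $u_-$ at two points displaced along the $H$-flow, and that requires knowing that the second $G$-side of your rectangle, the one issued from $\F_H^{-t}(x,p_0)$, is again a calibrated curve of $u_-$, i.e.\ that $\F_H^{-t}(x,p_0)$ still lies on the graph of $\d u_-$. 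Second, the crux you name, pinning the energy level $\alpha_H[0]$ (equivalently, that $u_-\in\HH_H(\alpha_H[0])$), is proposed to follow from invariance of the graph of $\d u_-$ under $\F_H$. Both points thus rest on graph invariance; but in the paper that invariance is Corollary \ref{coinv}, which is \emph{deduced from} Theorem \ref{inv}, so this direction is circular unless you give an independent proof, which you do not.

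The idea you are missing is the Mather set. The paper works on $\M_G$ and uses four facts: (i) $\MM_G$ is a symplectic invariant, hence invariant under $\F_H$, so $\F_H$-orbits started on $\MM_G$ stay in it, which replaces the graph-invariance you lack; (ii) every $G$-critical subsolution is differentiable on $\M_G$ with differential prescribed by the graph $\MM_G$, so $v_s=T_H^{-s}u_--T_H^{-s}u_0$, a difference of two $G$-weak KAM solutions, has vanishing differential on $\M_G$; (iii) the calibration identity $v_s(x)=v_0(x(-s))$ along $\F_H$-orbits in $\MM_G$, combined with (ii), yields $u_-=T_H^{-s}u_-+s\alpha_H[0]$ on $\M_G$, the constant $\alpha_H[0]$ being supplied automatically because $u_0$ is a double solution, which is exactly how the paper evades your ``controlling the constant'' problem; (iv) $\M_G$ is a uniqueness set for $G$-weak KAM solutions, and both $u_-$ and $T_H^{-s}u_-+s\alpha_H[0]$ are $G$-weak KAM solutions (Lemma \ref{sta} again), so the equality on $\M_G$ propagates to all of $M$. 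Without (i) and (ii), or a genuine substitute for them, your argument cannot pin the energy level nor transfer the calibration, so the proposal as written is not a proof.
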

In order to prove this theorem we need to recall a few facts about Aubry-Mather theory. We call Mather set for $G$
$$\MM_G=\overline{\bigcup_{\mu}\supp \mu}$$
the closed union of all supports of minimizing probability measures on $T^*M$ invariant by $\F_G$ and $\M_G$ its projection on $M$. Clearly, $\MM_G$ is invariant by $\F_G$ but it actually is a symplectic invariant (see \cite{BeSy} and \cite{So}) and therefore it is also invariant by $\F_H$. Mather proved  (\cite{MaZ})  that $\MM_G$ is a compact Lipschitz graph over $\M_G$.  Finally, Fathi (\cite{Fa}) proved that if $u:M\to \R{}$ is a critical subsolution for $G$ and if $x\in \M_G$ is in the projected Mather set then the function $u$ is differentiable at $x$, and $(x,d_x u)\in \MM_G$, therefore the differential is independent of the  critical subsolution.\\
Finally, let us state that $\M_G$ is a uniqueness set for the stationary critical Hamilton-Jacobi equation associated with $G$, which means that if two $G$-weak KAM solutions coincide on $\M_G$, they are in fact equal.
\\
With these facts in mind, we are now able to prove the theorem:
\begin{proof}[proof of theorem \ref{inv}]
Let $u_0$ be a double weak KAM solution given by \ref{dou}. By what was mentioned above, for any $s\geqslant 0$ the function
$$v_s=T^{-s}_Hu_--T^{-s}_Hu_0=T^{-s}_Hu_-+s\alpha_H[0]-u_0$$
 is differentiable on $\M_G$ with a vanishing differential. Let $(x,p)\in \MM_G$ and set 
 $$\forall s\in \R{},\  (x(s),p(s))=\F_H^s(x,p)\in \MM_G,$$
  then it is known (see  \cite{Fa} (4.11.1), \cite{fatfig07}  corollary B.20, \cite{Be2}) that
 \begin{multline*}
 \forall s>0,\  v_s(x)=T^{-s}_Hu_-(x)-T^{-s}_Hu_0(x)\\
 =u_-(x(-s))+\int_{-s}^0 L_H (\L_H^{-1}(\F_H^\sigma(x,p)))\d\sigma \\
 -u_0(x(-s))-\int_{-s}^0 L_H (\L_H^{-1}(\F_H^\sigma(x,p)))\d\sigma\\
 =u_-(x(-s))-u_0(x(-s))=v_0(x(-s)).
 \end{multline*}
 Since the trajectory $s\mapsto x(s), s\in \mathbb{R}$ is $C^2$, has its image included in $\M_G$ and the function $v_0$ has a vanishing differential on it, we can deduce that $v_0$ is constant on the image of   $s\mapsto x(s)$. Therefore, 
 \begin{eqnarray}
 \forall s>0,\  v_s(x)&=&T^{-s}_Hu_-(x)-T^{-s}_Hu_0(x)\nonumber\\
&=&T^{-s}_Hu_-(x)+s\alpha_H[0]-u_0(x)\nonumber\\
&=&u_-(x)-u_0(x).\nonumber
\end{eqnarray}
In short, 
\begin{equation}\label{==}
\forall x\in \M_G,\  u_-(x)=T^{-s}_Hu_-(x)+s\alpha_H[0].
\end{equation}
We have proved the invariance on $\M_G$, it remains to prove the same on its complementary. 
But the equality of $u_-$ and of  $T^{-s}_Hu_-+s\alpha_H[0]$ everywhere follows directly from the facts that they are both $G$-weak KAM solutions (\ref{sta}) and that two $G$-weak KAM solutions that coincide on $\M_G$ must coincide everywhere (\cite{Fa} Theorem 4.12.6).
\end{proof}
\begin{co}\label{coinv}
If $M$ is compact and if $u_-$ is a $G$-weak KAM solution then the graph of the differential of $u_-$, $\Gamma(u_-)$ verifies the following:
\begin{equation}\label{rompe}
\forall t\leqslant 0, \  \F_H^{-t}(\Gamma(u_-))\subset \Gamma(u_-).
\end{equation}
 Note that $(\F_H^{-t})_{t\geqslant 0}$ is a one-parameter semi-group of symplectomorphisms preserving $G$. Moreover, the same holds for $\overline{\Gamma(u_-)}$.
\end{co}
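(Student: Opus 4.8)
The plan is to leverage Theorem \ref{inv}, which we have just established: since $u_-$ is a $G$-weak KAM solution, it is automatically an $H$-weak KAM solution, i.e. $u_- \in \s^-_H$ with the relation $u_- = T_H^{-t}u_- + t\alpha_H[0]$ for all $t \geq 0$. This is the key input that transfers the problem from the $G$-dynamics to the $H$-dynamics, where the invariance of the graph of the differential is a standard feature of weak KAM theory. First I would recall the classical fact (Fathi, \cite{Fa}) that if $u_-$ is an $H$-weak KAM solution and $x$ is a point of differentiability of $u_-$, then the calibrated backward $H$-characteristic through $(x, \d_x u_-)$ stays in the graph $\Gamma(u_-)$, and along it $u_-$ is differentiable with differential prescribed by the Hamiltonian flow. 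Concretely, for a negative (backward) weak KAM solution, the flow $\F_H^{-t}$ for $t \geq 0$ pushes the graph into itself.

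The main body of the argument is then to make precise the inclusion $\F_H^{-t}(\Gamma(u_-)) \subset \Gamma(u_-)$ for $t \leq 0$ (equivalently $\F_H^{s}(\Gamma(u_-)) \subset \Gamma(u_-)$ for $s \geq 0$, matching the sign convention in \eqref{rompe}). I would take $(x, \d_x u_-) \in \Gamma(u_-)$ at a differentiability point and follow the $H$-Hamiltonian trajectory. The weak KAM property guarantees that the projected curve is $H$-calibrated, so by the characterization of the differential along calibrated curves, $u_-$ is differentiable at every point of the projected trajectory and its differential coincides with the momentum coordinate of $\F_H^{-t}(x,\d_x u_-)$. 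This is exactly the statement that the image point again lies in $\Gamma(u_-)$, giving the desired inclusion. The semigroup property $\F_H^{-(t_1+t_2)} = \F_H^{-t_1}\circ \F_H^{-t_2}$ then confirms that $(\F_H^{-t})_{t \geq 0}$ acts as a one-parameter semi-group preserving $\Gamma(u_-)$; that these are symplectomorphisms preserving $G$ follows from the commutation hypothesis, since each $\F_H^{-t}$ is the time-$(-t)$ map of the Hamiltonian flow of $H$, hence an exact symplectomorphism, and $G$ is a first integral of $H$ (established in Section 1).

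For the closure $\overline{\Gamma(u_-)}$, I would argue by continuity: the map $\F_H^{-t}$ is a homeomorphism of $T^*M$, so it sends $\overline{\Gamma(u_-)}$ into $\overline{\F_H^{-t}(\Gamma(u_-))} \subset \overline{\Gamma(u_-)}$, using the inclusion just proved for $\Gamma(u_-)$ itself. The part I expect to require the most care is the differentiability claim along the full trajectory: one must verify that the point $x$ can be chosen in the set of differentiability and that calibration is preserved under the flow for all intermediate times, not merely at the endpoints. This is where the precise statement of the calibrated-curve lemma (the references \cite{Fa} (4.11.1), \cite{fatfig07} corollary B.20, \cite{Be2} already invoked in the proof of Theorem \ref{inv}) does the real work, and I would simply cite it rather than reprove it.
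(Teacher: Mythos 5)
Your proposal follows the same route as the paper's proof: invoke Theorem \ref{inv} to upgrade the $G$-weak KAM solution $u_-$ to an $H$-weak KAM solution, then apply Fathi's graph-invariance result for $H$-weak KAM solutions (calibrated curves propagate differentiability, with the differential equal to the momentum coordinate of $\F_H^{-s}(x,\d_xu_-)$), and treat $\overline{\Gamma(u_-)}$ by continuity of the flow. One caveat: the invariance is under the \emph{backward} flow $(\F_H^{-t})_{t\geqslant 0}$, which is what your main argument actually establishes; your parenthetical asserting $\F_H^{s}(\Gamma(u_-))\subset\Gamma(u_-)$ for $s\geqslant 0$ (forward flow) merely reproduces a sign typo in \eqref{rompe} --- the quantifier there should read $t\geqslant 0$, consistently with the remark that $(\F_H^{-t})_{t\geqslant 0}$ is a semi-group --- and is false as stated.
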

\begin{proof}
It is proved in \cite{Fa} (Theorem 4.13.2) that if $u_-\in \s^-_H$ is a weak KAM solution for $H$, then
$$\forall t\leqslant 0, \  \F_H^{-t}(\Gamma(u_-))\subset \Gamma(u_-).$$
Let us recall the main steps of this proof. If $x\in M$ is a differentiability point of $u_-$, the following holds:
$$ \forall s>0,\  u_-(x)=T^{-s}_Hu_-(x)
 =u_-(x(-s))+\int_{-s}^0 L_H (\L_H^{-1}(\F_H^\sigma(x,\d_x u_-)))\d\sigma,$$
 where we have used the following notation: 
 $$\forall s\in \R{},\  (x(s),p(s))=\F_H^s(x,\d_x u).$$
 Moreover, it can be proved that for all $s\geqslant 0$, the point $x(-s)$ is then a differentiability point $u_-$ which verifies $\d_{x(-s)}u_-=p(-s)$ (one could use the fact that $u_-$ is locally semi-concave and that $p(-s)$ is in the sub-differential of $u_-$ at $x(-s)$ (\cite{Fa} proposition 4.11.1)). This proves the inclusion (\ref{rompe}) since by \ref{inv} any weak KAM solution for $G$ is also a weak KAM solution for $H$. The end of the corollary is straightforward.
 \end{proof}

\begin{rem}\rm
The proof of theorem \ref{inv} is very similar to the one given in $\cite{Mad}$ of a result (in the compact case) concerning the stability of weak KAM solutions by diffeomorphisms of the base space. More precisely, let $\Gamma^G(M)$ denote the set of $C^1$ diffeomorphisms which preserve $G$ equipped with the topology of uniform convergence on compact subsets. Let $\Gamma^G_0$ denote the identity component of $\Gamma^G$. Then if $g\in \Gamma^G_0$, any weak KAM solution for $G$ is stable by $g$. In this case, we have corollary \ref{coinv} as a  similar statement. Indeed it asserts that for a certain class of symplectomorphisms preserving $G$, the graph of the differential of a weak KAM solution is also, in a weak sense, stable by these symplectomorphisms.
\end{rem}


\section{Positive time Lax-Oleinik semi-groups and $C^{1,1}$ subsolutions}
As usual in weak KAM theory, there is a positive time analog for every result proved. Let us see how this applies to commuting Hamiltonians. Here again, we follow the exposition from \cite{Fa}. \\
Given a Tonelli Hamiltonian, $G:T^*M\to \R{}$ and his associated Lagrangian $L_G:TM\to \R{}$, we can define the symmetrical Hamiltonian and Lagrangian defined as follows:
$$\forall (x,p)\in T^*M,\  \widehat{G}(x,p)=H(x,-p),$$
$$\forall (x,v)\in TM,\ \widehat{L}_G(x,v)=L(x,-v).$$
Obviously, $\widehat{G}$ and $\widehat{L}_G$ are once again Legendre transform of one another and are still Tonelli. We may now define the positive time Lax-Oleinik semi-group of a function $u:M\to \R{}$:
$$\forall s>0,\ \forall x\in M,\  T_G^{+s}u(x)=-T_{\widehat{G}}^{-s}(-u)(x).$$
Now, if $H$ is another Tonelli Hamiltonian which Poisson commutes with $G$, it is clear that $\widehat{G}$ and $\widehat{H}$ also Poisson commute. Therefore, we have the following results:
\begin{Th}[G. Barles-A. Tourin]\label{com+}
 The positive time Lax-Oleinik semi-groups commute, that is, if $u:M\to \R{}$ is a function and $s,t$ are two positive real numbers then 
$$T_G^{+s}T_H^{+t}u=T_H^{+t}T_G^{+s}u.$$
\end{Th}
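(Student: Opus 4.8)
The plan is to deduce this statement directly from the negative-time commutation property (Theorem \ref{com}), exploiting the definition of the positive-time semi-group through the symmetrical Hamiltonian. The only ingredient beyond \ref{com} is the observation, already recorded above, that if $G$ and $H$ Poisson commute then so do $\widehat{G}$ and $\widehat{H}$, and that the latter are again Tonelli; hence Theorem \ref{com} applies verbatim to the pair $(\widehat{G},\widehat{H})$.

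First I would unwind the left-hand side. Setting $w=T_H^{+t}u=-T_{\widehat{H}}^{-t}(-u)$ and using $T_G^{+s}v=-T_{\widehat{G}}^{-s}(-v)$, one gets
$$T_G^{+s}T_H^{+t}u=-T_{\widehat{G}}^{-s}\left(-w\right)=-T_{\widehat{G}}^{-s}\left(T_{\widehat{H}}^{-t}(-u)\right),$$
the two minus signs in $-(-T_{\widehat{H}}^{-t}(-u))$ cancelling. Symmetrically, unwinding the right-hand side with $w'=T_G^{+s}u=-T_{\widehat{G}}^{-s}(-u)$ yields
$$T_H^{+t}T_G^{+s}u=-T_{\widehat{H}}^{-t}\left(T_{\widehat{G}}^{-s}(-u)\right).$$
It then remains only to compare these two right-hand sides. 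Applying Theorem \ref{com} to the commuting Tonelli Hamiltonians $\widehat{G}$, $\widehat{H}$ and to the function $-u$ gives
$$T_{\widehat{G}}^{-s}T_{\widehat{H}}^{-t}(-u)=T_{\widehat{H}}^{-t}T_{\widehat{G}}^{-s}(-u),$$
so both displayed expressions equal the negative of this common value, and the identity follows. Since $u$ was arbitrary, this proves the theorem.

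I do not expect any genuine obstacle here: the entire mathematical content is contained in \ref{com}, and the only thing to watch is the sign bookkeeping introduced by the three occurrences of the $-(\,\cdot\,)$ and symmetrization operations hidden in the definition of $T_G^{+s}$. The single point worth stating explicitly, so that the reduction is legitimate, is that $\widehat{G}$ and $\widehat{H}$ are again Tonelli Hamiltonians that again Poisson commute, which makes Theorem \ref{com} applicable to them.
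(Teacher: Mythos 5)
Your proof is correct and is essentially the paper's own argument: the paper presents Theorem \ref{com+} as an immediate consequence of the definition $T_G^{+s}u=-T_{\widehat{G}}^{-s}(-u)$ together with the observation that $\widehat{G}$ and $\widehat{H}$ are again Tonelli and Poisson commute, so that Theorem \ref{com} applies to them. Your explicit sign bookkeeping merely fills in the cancellation the paper leaves implicit, and it is carried out correctly.
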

If $\alpha\in \R{}$, following Fathi, we say a function $u:M\to \R{}$ is a positive time $(\alpha,G)$-weak KAM solution if 
 $$\forall t\geqslant 0,\  u=T_G^{+t}u -t\alpha.$$
 We denote by $\s_G^+(\alpha)$ the set of positive time $(\alpha,G)$-weak KAM solutions. Obviously, we define analogously the notion of positive time $(\alpha,H)$-weak KAM solution and the set $\s_H^+(\alpha)$.
 Let us now state Fathi's weak KAM theorem (we state it for $G$) 

\begin{Th}[positive time weak KAM]
 The set $\s_G^+(\alpha_G[0])$ is not empty, that is:
 $$\exists u_+:M\to \R{},\ \forall s\geqslant 0,\  u_+=T_G^{+s}u_+-s\alpha_G[0].$$
 \end{Th}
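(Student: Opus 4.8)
The plan is to deduce this statement directly from the already established (negative time) weak KAM theorem applied to the symmetric Hamiltonian $\widehat{G}$, exploiting the duality built into the definition $T_G^{+s}u = -T_{\widehat{G}}^{-s}(-u)$. Since $\widehat{G}$ is again Tonelli, the weak KAM theorem furnishes a function $w:M\to\R{}$ and a critical value $\alpha_{\widehat{G}}[0]$ with $w = T_{\widehat{G}}^{-s}w + s\alpha_{\widehat{G}}[0]$ for every $s\geqslant 0$. Setting $u_+ = -w$ and feeding this into the definition of the positive time semi-group gives $u_+ = -T_{\widehat{G}}^{-s}(-u_+) - s\alpha_{\widehat{G}}[0] = T_G^{+s}u_+ - s\alpha_{\widehat{G}}[0]$, so $u_+$ is a positive time $(\alpha_{\widehat{G}}[0],G)$-weak KAM solution.

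It then remains only to identify the two critical values, that is, to prove $\alpha_{\widehat{G}}[0] = \alpha_G[0]$. First I would record the elementary action identity $A_{\widehat{G}}^t(x,y) = A_G^t(y,x)$, obtained by the change of variable $\sigma \mapsto t-\sigma$, which turns a curve from $x$ to $y$ into the reversed curve from $y$ to $x$ while transforming $\widehat{L}_G(\gamma,\dot\gamma) = L_G(\gamma,-\dot\gamma)$ back into $L_G$. Using this identity and relabelling $x \leftrightarrow y$, one checks that $u\in\HH_{\widehat{G}}(\alpha)$ if and only if $-u\in\HH_G(\alpha)$; hence $u\mapsto -u$ is a bijection between $\HH_G(\alpha)$ and $\HH_{\widehat{G}}(\alpha)$. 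In particular $\HH_{\widehat{G}}(\alpha)$ is nonempty exactly when $\HH_G(\alpha)$ is, and since the weak KAM theorem characterizes the critical value as the least $\alpha$ for which the subsolution set is nonempty, the two critical values agree: $\alpha_{\widehat{G}}[0] = \alpha_G[0]$.

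With this identification, the function $u_+ = -w$ built above satisfies $u_+ = T_G^{+s}u_+ - s\alpha_G[0]$ for all $s\geqslant 0$, which is exactly the asserted conclusion. The only genuinely delicate point is the bookkeeping in the duality, namely keeping careful track of the two independent sign reversals (one coming from the definitions of $\widehat{G}$ and $\widehat{L}_G$, the other from the definition of $T_G^{+s}$) and verifying that the critical values coincide; once these are in place, everything else is a direct transcription of the negative time weak KAM theorem. I note also that, just as in the negative time case, this argument requires no commutation hypothesis and no compactness of $M$, since it rests solely on the weak KAM theorem for the single Tonelli Hamiltonian $\widehat{G}$.
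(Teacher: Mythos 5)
Your proof is correct. Note that the paper does not actually prove this statement: it is quoted as Fathi's (positive time) weak KAM theorem, with the identification of the constant as $\alpha_G[0]$ asserted without argument in the surrounding remark. Your derivation is the standard one, and it is exactly the route that the paper's own framework invites: the definition $T_G^{+s}u=-T_{\widehat{G}}^{-s}(-u)$ is designed so that positive time statements reduce, under $u\mapsto -u$, to negative time statements for the Tonelli Hamiltonian $\widehat{G}$. The two points you flag as delicate are both handled correctly. The sign bookkeeping works out: from $w=T_{\widehat{G}}^{-s}w+s\alpha_{\widehat{G}}[0]$ one gets, for $u_+=-w$, that $T_G^{+s}u_+=-T_{\widehat{G}}^{-s}w=u_++s\alpha_{\widehat{G}}[0]$, i.e. $u_+=T_G^{+s}u_+-s\alpha_{\widehat{G}}[0]$. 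And the identification $\alpha_{\widehat{G}}[0]=\alpha_G[0]$ is genuinely needed and correctly proved: the time-reversal identity $A_{\widehat{G}}^t(x,y)=A_G^t(y,x)$ shows that $u\mapsto -u$ is a bijection between $\HH_G(\alpha)$ and $\HH_{\widehat{G}}(\alpha)$ for every $\alpha$, and since the weak KAM theorem characterizes the critical value as the smallest $\alpha$ with nonempty subsolution set (subsolution sets being monotone in $\alpha$), the two thresholds coincide. Your closing observation is also accurate: the argument uses neither compactness of $M$ nor the commutation hypothesis, matching the generality in which the paper states the theorem.
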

 
 \begin{rem}\label{comm+}\rm
 Let us mention that $\alpha_G[0]$ is again  Ma\~ n\' e's critical value. Moreover we will set $\s_G^+(\alpha_G[0])=\s_G^+$. If $M$ is compact, this notation is very natural, since as for negative time, if a function $u$ is a positive time $(\alpha,G)$-weak KAM solution for some $\alpha$ then $\alpha=\alpha_G[0]$. 
 \end{rem}
 \begin{lm}\label{sta+}
 The following assertions are true:
 \begin{enumerate}
 \item Let $u:M\to \R{}$ be a real valued function and $\alpha\in \R{}$, then $u\in  \HH_G(\alpha)$ if and only if
 $$\forall s\geqslant 0,\  u\geqslant T_G^{+s} u-s\alpha.$$
 \item For any $t\geqslant 0$ and $\alpha \in \R{}$, the set $\HH_G(\alpha)$ is stable by $T_H^{+t}$.
 \item For any $t\geqslant 0$ and $\alpha \in \R{}$, the set $\s^+_G(\alpha)$ is stable by $T_H^{+t}$.
 \end{enumerate}
 \end{lm}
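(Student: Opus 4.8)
The plan is to reduce every assertion to its negative-time counterpart in Lemma \ref{sta}, applied to the reversed Hamiltonians $\widehat{G}$ and $\widehat{H}$, through the duality $u\mapsto -u$. First I would set up the dictionary relating the two settings. Reversing a curve $\gamma:[0,t]\to M$ by $\tilde{\gamma}(\sigma)=\gamma(t-\sigma)$ and using $\widehat{L}_G(x,v)=L_G(x,-v)$ gives the action-reversal identity
$$A_{\widehat{G}}^t(x,y)=A_G^t(y,x),$$
and likewise for $H$. Substituting this into the definition of a subsolution shows that
$$u\in\HH_G(\alpha)\iff -u\in\HH_{\widehat{G}}(\alpha),$$
and the same computation gives $v\in\HH_{\widehat{G}}(\alpha)\iff -v\in\HH_G(\alpha)$. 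I would also record the two identities coming straight from the definition of the positive-time semi-group, namely $T_G^{+s}u=-T_{\widehat{G}}^{-s}(-u)$ and $T_H^{+t}u=-T_{\widehat{H}}^{-t}(-u)$, together with the observation (already made before Theorem \ref{com+}) that $\widehat{G}$ and $\widehat{H}$ again Poisson commute, so that Lemma \ref{sta} applies verbatim to the pair $(\widehat{G},\widehat{H})$.

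With this dictionary in hand, each point becomes a one-line translation. For the first assertion, $u\in\HH_G(\alpha)$ is equivalent to $-u\in\HH_{\widehat{G}}(\alpha)$, which by the first part of Lemma \ref{sta} means $-u\leqslant T_{\widehat{G}}^{-s}(-u)+s\alpha$ for all $s\geqslant 0$; rewriting $T_{\widehat{G}}^{-s}(-u)=-T_G^{+s}u$ and changing signs turns this into $u\geqslant T_G^{+s}u-s\alpha$, as desired. For the second assertion, if $u\in\HH_G(\alpha)$ then $-u\in\HH_{\widehat{G}}(\alpha)$, which is stable by $T_{\widehat{H}}^{-t}$ by the second part of Lemma \ref{sta}; hence $T_{\widehat{H}}^{-t}(-u)=-T_H^{+t}u$ lies in $\HH_{\widehat{G}}(\alpha)$, and the duality then yields $T_H^{+t}u\in\HH_G(\alpha)$.

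The third assertion follows the same pattern: $u\in\s_G^+(\alpha)$ means $u=T_G^{+t}u-t\alpha$ for all $t$, which upon taking negatives reads $-u=T_{\widehat{G}}^{-t}(-u)+t\alpha$, i.e. $-u\in\s_{\widehat{G}}^-(\alpha)$; stability of $\s_{\widehat{G}}^-(\alpha)$ under $T_{\widehat{H}}^{-t}$ (third part of Lemma \ref{sta}) gives $-T_H^{+t}u\in\s_{\widehat{G}}^-(\alpha)$, and negating once more returns $T_H^{+t}u\in\s_G^+(\alpha)$. The argument is essentially bookkeeping and I do not expect any genuine obstacle; the only point requiring care is the action-reversal identity $A_{\widehat{G}}^t(x,y)=A_G^t(y,x)$ and the attendant sign juggling, since a single misplaced reversal would flip an inequality and break the whole translation. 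Once that identity is nailed down, the rest is automatic.
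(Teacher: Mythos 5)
Your proof is correct and is exactly the argument the paper intends: the paper states Lemma \ref{sta+} without proof, regarding it as the routine positive-time analog obtained through the duality $u\mapsto -u$, $T_G^{+s}u=-T_{\widehat{G}}^{-s}(-u)$, and the fact that $\widehat{G}$, $\widehat{H}$ still Poisson commute, all of which is set up at the start of that section. Your write-up simply makes this implicit reduction to Lemma \ref{sta} explicit, including the key action-reversal identity $A_{\widehat{G}}^t(x,y)=A_G^t(y,x)$, and all the sign bookkeeping checks out.
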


\begin{Th}\label{inv+}
If $M$ is compact and $u_+:M\to \R{}$ is a positive time weak KAM solution for $G$ then it is  also a  positive time weak KAM solution for $H$: $u\in \s^+$. In short, the following equalities hold:
$$\s^+_G=\s^+_H=\s^+.$$
\end{Th}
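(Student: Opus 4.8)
The plan is to mirror the proof of Theorem~\ref{inv} for the negative-time case, transporting it through the symmetrisation $G\mapsto\widehat G$, $H\mapsto\widehat H$ rather than redoing the geometric argument from scratch. First I would recall the defining relation $T_G^{+s}u=-T_{\widehat G}^{-s}(-u)$ and observe that $u_+$ is a positive-time $(\alpha_G[0],G)$-weak KAM solution if and only if $-u_+$ is a (negative-time) weak KAM solution for $\widehat G$ with the same critical value $\alpha_{\widehat G}[0]=\alpha_G[0]$. Since $G$ and $H$ Poisson commute, so do $\widehat G$ and $\widehat H$ (this is noted just before Theorem~\ref{com+}), and both are Tonelli. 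Hence Theorem~\ref{inv} applies verbatim to the commuting pair $(\widehat G,\widehat H)$ on the compact manifold $M$, yielding $\s^-_{\widehat G}=\s^-_{\widehat H}$.

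The second step is to translate this equality back. I would check that the map $u\mapsto -u$ is a bijection between $\s_G^+$ and $\s^-_{\widehat G}$, and likewise between $\s_H^+$ and $\s^-_{\widehat H}$. Concretely, if $u_+\in\s^+_G$ then $-u_+\in\s^-_{\widehat G}=\s^-_{\widehat H}$, so for every $t\geqslant0$ we have $-u_+=T_{\widehat H}^{-t}(-u_+)+t\alpha_H[0]$; applying the negation and the definition $T_H^{+t}v=-T_{\widehat H}^{-t}(-v)$ with $v=u_+$ recovers $u_+=T_H^{+t}u_++(-t\alpha_H[0])$ after adjusting signs, i.e.\ $u_+=T_H^{+t}u_+-t\alpha_H[0]$, which is exactly the statement that $u_+\in\s^+_H$. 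The reverse inclusion is symmetric, giving $\s^+_G=\s^+_H$, and we set this common set equal to $\s^+$.

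The only genuine thing to verify, and the step I expect to be the main (though minor) obstacle, is the bookkeeping of constants: one must confirm that the critical value is unchanged under symmetrisation, $\alpha_{\widehat H}[0]=\alpha_H[0]$ and $\alpha_{\widehat G}[0]=\alpha_G[0]$. This follows because $\widehat H(x,p)=H(x,-p)$ shares the same Lagrangian action values under $v\mapsto-v$, so the Mañé critical values coincide; in the compact case Remark~\ref{comm+} already pins the constant down as $\alpha_G[0]$. With this in hand the sign flips in the definition of $T^{+}$ match up cleanly and no new geometry (no re-running of the Stokes' argument of Lemma~\ref{com1}) is needed. I would therefore present the proof as a short reduction: state the correspondence $u\mapsto-u$, invoke Theorem~\ref{inv} for $(\widehat G,\widehat H)$, and conclude.
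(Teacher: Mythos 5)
Your reduction is correct and is exactly the argument the paper intends: Theorem~\ref{inv+} is stated there without a written proof precisely because it follows from the symmetrisation $G\mapsto\widehat G$, $H\mapsto\widehat H$ (which preserves Poisson commutation and Ma\~n\'e critical values) together with Theorem~\ref{inv} applied to $(\widehat G,\widehat H)$ and the correspondence $u\mapsto -u$ between $\s_G^+$ and $\s_{\widehat G}^-$. Your version simply makes explicit the sign bookkeeping and the equality $\alpha_{\widehat G}[0]=\alpha_G[0]$ that the paper leaves implicit, so it matches the paper's approach.
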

Equipped with the positive time Lax-Oleinik semi-groups, we are now able to prove existence theorems of $C^{1,1}$ common subsolutions for $G$ and $H$ (this is theorem \ref{subs}).
\begin{Th}[existence of common $C^{1,1}$ subsolutions]
Assume that the pair  $(\alpha,\alpha')\subset \R{2}$ is such that $\HH(\alpha,\alpha')\neq \varnothing$, then there is a locally $C^{1,1}$ function in $\HH(\alpha,\alpha')$. Moreover, $\HH(\alpha,\alpha')\cap C^{1,1}(M,\R{})$ is dense in $\HH(\alpha,\alpha')$ for the compact open topology.
\end{Th}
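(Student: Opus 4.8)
The plan is to regularize a given element of $\HH(\alpha,\alpha')$ by composing the positive and negative time Lax-Oleinik semi-groups of $G$, exactly as in the Lasry--Lions regularization used by Fathi--Siconolfi and Bernard (\cite{FaSi}, \cite{Be2}), and then to use the commutation of the semi-groups of $G$ and $H$ to guarantee that the regularized function remains an $(\alpha',H)$-subsolution. Fix $u\in\HH(\alpha,\alpha')$ and, for $s>0$, define
\[
v_s=T_G^{+s}u-s\alpha,\qquad u_s=T_G^{-s}v_s+s\alpha.
\]
I would first record, using the first assertions of lemmas \ref{sta} and \ref{sta+}, that $u\in\HH_G(\alpha)$ is equivalent to $T_G^{+s}u-s\alpha\leqslant u\leqslant T_G^{-s}u+s\alpha$, and that $v_s$ is again an $(\alpha,G)$-subsolution. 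Applying lemma \ref{sta} to $v_s$ then gives $v_s\leqslant u_s$, while monotonicity of $T_G^{-s}$ together with $v_s\leqslant u$ gives $u_s\leqslant T_G^{-s}u+s\alpha$; the resulting sandwich $v_s\leqslant u_s\leqslant T_G^{-s}u+s\alpha$ forces $u_s\to u$ uniformly on compact sets as $s\to 0^+$.

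The regularity of $u_s$ is the content I would quote from \cite{FaSi} and \cite{Be2}: being an image of $T_G^{-s}$, the function $u_s$ is locally semiconcave, and since $v_s$ is locally semiconvex (as an image of $T_G^{+s}$) and an $(\alpha,G)$-subsolution, $u_s$ is also locally semiconvex on the scale $s$. A function that is simultaneously locally semiconcave and locally semiconvex is locally $C^{1,1}$, so $u_s\in\HH_G(\alpha)\cap C^{1,1}(M,\R{})$.

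The new point is that $u_s$ is still an $(\alpha',H)$-subsolution. Since $u\in\HH_H(\alpha')$, and since the symmetric forms of the second assertions of lemmas \ref{sta} and \ref{sta+} state that $\HH_H(\alpha')$ is stable under $T_G^{-s}$ and under $T_G^{+s}$, and since adding a constant clearly preserves $\HH_H(\alpha')$, each operation defining $v_s$ and then $u_s$ keeps us inside $\HH_H(\alpha')$. These stability statements are precisely where the commutation of the semi-groups (\ref{com}, \ref{com+}) enters. Therefore $u_s\in\HH(\alpha,\alpha')\cap C^{1,1}(M,\R{})$, which proves existence, and the uniform convergence $u_s\to u$ proves the density statement for the compact open topology.

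The main obstacle is the regularization lemma itself, namely verifying that $u_s$ is genuinely locally $C^{1,1}$ (semiconcave and semiconvex with a common local modulus) and that it converges to $u$; this is the technical heart and is borrowed from \cite{FaSi} and \cite{Be2}. By contrast, the genuinely new ingredient of the proof, the preservation of the $H$-subsolution property along the $G$-regularization, is immediate once the stability lemmas of the previous sections, which themselves rest on commutation, are available.
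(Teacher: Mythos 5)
Your second half --- using lemmas \ref{sta} and \ref{sta+} (hence the commutation theorems \ref{com}, \ref{com+}) to keep every function produced by the $G$-regularization inside $\HH_H(\alpha')$, together with the sandwich argument giving convergence to $u$ --- is exactly the paper's argument and is correct. The gap is in the regularization step itself. You compose with \emph{equal} time parameters: since the semi-groups commute with addition of constants, your function is $u_s=T_G^{-s}T_G^{+s}u$, and you claim that $T_G^{-s}$ preserves the semiconvexity of $v_s=T_G^{+s}u-s\alpha$. The Lasry--Lions/Bernard preservation lemma requires the time of the second semi-group to be small compared with the reciprocal of the semiconvexity constant of its input; but $v_s$ is semiconvex only with constant of order $1/s$, so equal parameters is precisely the borderline case, and there it fails. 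In fact the failure is total: for any curve $\gamma$ with $\gamma(s)=x$, taking $\gamma$ itself as competitor in the supremum defining $T_G^{+s}u(\gamma(0))$ gives $T_G^{+s}u(\gamma(0))+\int_0^sL_G(\gamma,\dot\gamma)\,\d\sigma\geqslant u(x)$, whence
\begin{equation*}
T_G^{-s}T_G^{+s}u\geqslant u\qquad\text{for every }u\text{ and every }s>0,
\end{equation*}
while if $u$ is an $(\alpha,G)$-subsolution, monotonicity and lemma \ref{sta+} give $T_G^{-s}T_G^{+s}u\leqslant T_G^{-s}u+s\alpha$. Consequently, if $u$ is a negative $(\alpha,G)$-weak KAM solution, then $u_s=u$ identically for every $s>0$: your scheme performs no regularization at all. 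Since weak KAM solutions are in general not $C^1$ --- e.g.\ $M=\R{}$, $G=H=\frac12 p^2$, $u(x)=-|x|$, which is a $(\nicefrac12,G)$-weak KAM solution and lies in $\HH(\nicefrac12,\nicefrac12)$ but has a corner at $0$ --- the claimed $C^{1,1}$ regularity of $u_s$ is false.

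This is why the paper does not use a single equal-parameter composition. Following \cite{fatfigrif} (itself after \cite{Be1}), one must take the positive-time parameter in each regularizing pair small \emph{relative} to the negative-time one (so that the semiconcavity gained by $T_G^{-t}$ survives the application of $T_G^{+s}$, $s\ll t$); moreover, since $M$ here is only complete, the relevant semiconcavity constants are merely locally uniform, no single pair of parameters works globally, and one needs the infinite iterated scheme $u_n=T_G^{+\e_n}T_G^{-\e_n}\cdots T_G^{+\e_1}T_G^{-\e_1}u_0$ with a suitable choice of the two sequences of constants, converging to a locally $C^{1,1}$ limit. (Your references are also off: \cite{FaSi} produces only $C^1$ subsolutions by a different method; the $C^{1,1}$ statements you need are those of \cite{Be1} and \cite{fatfigrif}.) Once your $u_s$ is replaced by that iterated scheme, your commutation argument applies verbatim --- the $u_n$ stay in $\HH_H(\alpha')$ by lemmas \ref{sta} and \ref{sta+}, and $\HH(\alpha,\alpha')$ is closed for the compact open topology --- and the theorem follows as in the paper.
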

\begin{proof}
The proof is just a simple adaptation of \cite{fatfigrif} which itself is very much inspired from \cite{Be1}. The idea is to use successively positive and negative Lax-Oleinik semi-groups in order to realize a kind of Lasry-Lions regularization. More precisely, if $u_0\in \HH(\alpha,\alpha')$, it is proved in \cite{fatfigrif} that for a suitable choice of "small" positive constants, $(\e^-_k)_{k\in \mathbb{N}^*}$ and $(\e^+_k)_{k\in \mathbb{N}^*}$,the sequence of functions 
$$\forall n\in \mathbb{N},\  u_n=T_G^{+\e_n}T_G^{-\e_n}\cdots T_G^{+\e_1}T_G^{-\e_1}u_0$$
converges (for the compact open topology) to a $C^{1,1}$ function $u_\infty$ which is an $(\alpha,G)$-subsolution. \\
Moreover, by \ref{sta} and \ref{sta+}, the functions $u_n$ are also $(\alpha',H)$-subsolutions, which proves that $u_\infty$ is itself an $(\alpha',H)$-subsolutions ($\HH(\alpha,\alpha')$ is closed for the compact open topology). Therefore, $u_\infty$ is a $C^{1,1}$ function which belongs to $\HH(\alpha,\alpha')$.\\
To prove the density result, just notice that by continuity of the Lax-Oleinik semi-groups as maps from $[0,+\infty[\times \HH(\alpha,\alpha')$ to  $\HH(\alpha,\alpha')$ (for the compact open topology on $\HH(\alpha,\alpha')$, see \cite{FaMa} Proposition 3.3) , by taking smaller sequences   $(\e^-_k)_{k\in \mathbb{N}^*}$ and $(\e^+_k)_{k\in \mathbb{N}^*}$, we can actually construct $u_\infty$ arbitrarily close to $u_0$.
\end{proof}

\section{More on the compact case}\label{4} 
Throughout this section, we assume $M$ is compact. Moreover, up to adding constants to $G$ and $H$, we will assume that $\alpha_G[0]=\alpha_H[0]=0$. In \cite{Fa3} Fathi proved the following:
\begin{Th}[paired weak KAM solutions]\label{paired}
Given a critical subsolution $u_G$ for $G$ (resp. $u_H$ for $H$), there exist a unique  negative weak KAM solution $u^-_G$ and a unique positive weak KAM solution $u^+_G$ (resp. $u^-_H$ and $u^+_H$) such that
$$u_{G|\mathcal{M}_G}=u^-_{G|\mathcal{M}_G}=u^+_{G|\mathcal{M}_G},$$
(resp. $u_{H|\mathcal{M}_H}=u^-_{H|\mathcal{M}_H}=u^+_{H|\mathcal{M}_H}$).
We denote this relation by $u_G^-\sim_Gu_G^+$ (resp. $u_H^-\sim_Hu_H^+$). Moreover, we have that 
$$\lim_{t\to +\infty}T^{-t}_Gu_G=u_G^-,$$
$$\lim_{t\to +\infty}T^{+t}_Gu_G=u_G^+,$$
(resp. $\lim_{t\to +\infty}T^{-t}_Hu_H=u^-_H$ and $\lim_{t\to +\infty}T^{+t}_Hu_H=u^+_H$) where the limits hold for the infinity norm.
\\
Finally, let us mention that paired weak KAM solutions are characterized by those limits, more precisely 
$$u_G^-\sim_Gu_G^+\Longleftrightarrow \lim_{t\to +\infty}T^{-t}_Gu^+_G=u_G^-\Longleftrightarrow \lim_{t\to +\infty}T^{+t}_Gu^-_G=u_G^+,$$
(resp. $u_H^-\sim_Hu_H^+\Longleftrightarrow \lim_{t\to +\infty}T^{-t}_Hu^+_H=u_H^-\Longleftrightarrow \lim_{t\to +\infty}T^{+t}_Hu^-_H=u_H^+$).
\end{Th}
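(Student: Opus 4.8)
The result is Fathi's (\cite{Fa3}); the plan is to produce $u_G^-$ and $u_G^+$ as monotone limits of the Lax--Oleinik semi-groups applied to the given critical subsolution $u_G$, to identify all three functions on $\M_G$ by an Aubry--Mather calibration argument, and finally to read off uniqueness and the limit characterizations from the fact that $\M_G$ is a uniqueness set for the critical equation.

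First I would set $u_G^-:=\lim_{t\to+\infty}T_G^{-t}u_G$ and check this limit exists. Since $u_G$ is a $G$-critical subsolution, Lemma \ref{sta}(1) (with $\alpha_G[0]=0$) gives $u_G\leqslant T_G^{-s}u_G$ for all $s\geqslant 0$; applying the monotone operator $T_G^{-t}$ and using the semi-group law shows that $t\mapsto T_G^{-t}u_G$ is non-decreasing. Each $T_G^{-t}u_G$ is again a critical subsolution, so the family is equi-Lipschitz (\cite{FaMa}); it is bounded above because, picking any $w\in\s_G^-$ and a constant $C$ with $u_G\leqslant w+C$, monotonicity yields $T_G^{-t}u_G\leqslant T_G^{-t}w+C=w+C$. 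A monotone, bounded, equi-Lipschitz family converges uniformly, and by non-expansiveness of $T_G^{-s}$ the limit satisfies $T_G^{-s}u_G^-=\lim_t T_G^{-(s+t)}u_G=u_G^-$, so $u_G^-\in\s_G^-$. Running the symmetric argument for $\widehat{G}$ produces $u_G^+:=\lim_{t\to+\infty}T_G^{+t}u_G\in\s_G^+$.

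The heart of the matter, and the step I expect to be the main obstacle, is to show $u_G=u_G^-$ on $\M_G$ (and likewise $u_G=u_G^+$). Fix $x\in\M_G$. By the recalled theorem of Fathi, $u_G$ is differentiable at $x$ with $(x,\d_x u_G)\in\MM_G$; since $\MM_G$ is $\F_G$-invariant and contained in the critical level $\{G=\alpha_G[0]\}=\{G=0\}$, the backward orbit $\sigma\mapsto\F_G^{\sigma}(x,\d_x u_G)$, $\sigma\leqslant 0$, stays in $\MM_G$, and its projection $x(\sigma)$ is a curve in $\M_G$ along which $u_G$ is differentiable with differential the fiber component $p(\sigma)$. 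The Legendre relation together with $G(x(\sigma),p(\sigma))=0$ gives $\frac{d}{d\sigma}u_G(x(\sigma))=p(\sigma)\,\dot x(\sigma)=L_G(x(\sigma),\dot x(\sigma))$, so integrating shows the curve is calibrated: $u_G(x)=u_G(x(-t))+\int_{-t}^0 L_G(x(\sigma),\dot x(\sigma))\,\d\sigma$. Hence $T_G^{-t}u_G(x)\leqslant u_G(x)$, and combined with the subsolution inequality $u_G(x)\leqslant T_G^{-t}u_G(x)$ this forces $T_G^{-t}u_G(x)=u_G(x)$ for all $t$, whence $u_G^-(x)=u_G(x)$. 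The forward orbit gives $u_G^+(x)=u_G(x)$ in the same way, so $u_{G|\M_G}=u^-_{G|\M_G}=u^+_{G|\M_G}$, i.e. $u_G^-\sim_G u_G^+$.

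Uniqueness and the characterizations are then formal. If $v\in\s_G^-$ also agrees with $u_G$ on $\M_G$, then $v=u_G^-$ on $\M_G$, and since two $G$-weak KAM solutions coinciding on the uniqueness set $\M_G$ coincide everywhere (\cite{Fa}), $v=u_G^-$; the same applies to $u_G^+$ via $\widehat{G}$. For the characterization, observe that $u_G^+\in\s_G^+$ is in particular a critical subsolution, so the construction above applied to $u_G^+$ yields $\lim_t T_G^{-t}u_G^+\in\s_G^-$ equal to $u_G^+$ on $\M_G$; if $u_G^-\sim_G u_G^+$ this limit agrees with $u_G^-$ on $\M_G$, hence equals $u_G^-$ everywhere by uniqueness. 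Conversely, if $\lim_t T_G^{-t}u_G^+=u_G^-$, evaluating on $\M_G$ (where $T_G^{-t}u_G^+=u_G^+$ by the calibration above) gives $u_G^-=u_G^+$ on $\M_G$, i.e. $\sim_G$. The equivalence with $\lim_t T_G^{+t}u_G^-=u_G^+$ follows identically with the roles of positive and negative time interchanged.
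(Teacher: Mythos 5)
Your proof is correct, but there is nothing in the paper to compare it against: Theorem \ref{paired} is not proved there at all, it is quoted as a result of Fathi with a citation to \cite{Fa3} and then used as a black box in the proof of Theorem \ref{same}. What you have written is a self-contained reconstruction, and it holds up. The monotonicity argument (from $u_G\leqslant T_G^{-s}u_G$ via Lemma \ref{sta}, plus comparison with a fixed element of $\s_G^-$) gives convergence of $T_G^{-t}u_G$ by elementary means, which is a genuine economy: it avoids invoking Fathi's convergence theorem for arbitrary continuous data (Theorem \ref{conv} in the paper), a much deeper result, which is only needed later for the proposition following it. Your calibration step on $\M_G$ is exactly in the spirit of the paper's own proof of Theorem \ref{inv}, and it correctly assembles the three background facts the paper recalls: differentiability of critical subsolutions at points of $\M_G$ with $(x,\d_xu_G)\in\MM_G$, flow-invariance of $\MM_G$, and Carneiro's theorem $G\equiv\alpha_G[0]$ on the Mather set to turn the Fenchel equality into $p(\sigma)\dot x(\sigma)=L_G(x(\sigma),\dot x(\sigma))$. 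Uniqueness and the two limit characterizations then do follow formally from the uniqueness-set property of $\M_G$, as you say. Two steps you leave implicit deserve a sentence each: the identification $\d_{x(\sigma)}u_G=p(\sigma)$ requires Mather's graph theorem (two points of $\MM_G$ in the same fiber coincide), not just membership of both in $\MM_G$; and the transfer of the whole argument to positive-time solutions through $\widehat G$ requires $\M_{\widehat G}=\M_G$ (Mather measures of $\widehat L_G$ are the images of those of $L_G$ under $(x,v)\mapsto(x,-v)$), which is standard but should be stated, since both the existence of $u_G^+$ and the uniqueness of positive solutions agreeing on $\M_G$ rest on it.
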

Let us define (for $G$) the Aubry and Ma\~n\' e sets and the Peierls barrier:
 \begin{df}\rm
 The Aubry set is defined by
 $$\widehat {\mathcal{A}}_G=\bigcap_{u^-\sim_G u^+}\left\{(x,d_x u^-), u^-(x)=u^+(x) \right\},$$
 the Ma\~n\' e set is defined by 
  $$\widehat {\mathcal{N}}_G=\bigcup_{u^-\sim_G u^+}\left\{(x,d_x u^-), u^-(x)=u^+(x) \right\},$$
  and finally the Peierls barrier is defined by
  $$\forall (x,y)\in M^2, \ h(x,y)=\sup_{u^-\sim_G u^+} u^-(y)-u^+(x).$$
 \end{df}
 \begin{rem}\rm
 In the definitions of the Aubry and Ma\~n\' e sets it must be justified why the differentials of $u^-$ exist. This comes from the facts that if $(u^-,u^+)$ are paired weak KAM solutions, then $u^-$ (resp. $u^+$) is locally semi-concave (resp. locally semi-convex, see \cite{Fa} proposition 6.2.1), with $u^-\geqslant u^+$. Hence $u^-$ is differentiable on the set $\{x\in M,u^-(x)=u^+(x)\}$.
 \\
  Usually, the Aubry, Ma\~n\' e sets  and the Peierls barrier are rather defined on the tangent bundle of $M$ using the Lagrangian setting and the action functionals. However, for simplicity of the exposition, we only give here these equivalent definitions from the weak KAM point of view.
 \end{rem}
  
The result we are going to prove is that both relations $\sim_G$ and $\sim_H$ are the same:
\begin{Th}\label{same}
Let $u^-$ and $u^+$ be a negative and a positive weak KAM solution, then if $u^-\sim_G u^+$ then $u^-\sim_H u^+$.
\end{Th}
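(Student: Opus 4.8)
The plan is to characterize the pairing relations $\sim_G$ and $\sim_H$ entirely in terms of the limits of the Lax-Oleinik semi-groups, and then exploit the fact—already established in Theorems \ref{inv} and \ref{inv+}—that the sets of weak KAM solutions coincide ($\s^-_G=\s^-_H=\s^-$ and $\s^+_G=\s^+_H=\s^+$), together with the commutation of the semi-groups (Theorem \ref{com}). Suppose $u^-\sim_G u^+$. By the characterization at the end of Theorem \ref{paired}, this is equivalent to
$$\lim_{t\to+\infty}T^{-t}_Gu^+=u^-.$$
To conclude $u^-\sim_H u^+$, it suffices by the same characterization to show that $\lim_{t\to+\infty}T^{-t}_Hu^+=u^-$ (since $\alpha_G[0]=\alpha_H[0]=0$ by the normalization of this section).

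First I would observe that $u^+\in\s^+_G=\s^+$ and $u^-\in\s^-_G=\s^-$, so both functions are simultaneously weak KAM solutions for $G$ and for $H$; in particular $T^{-t}_Hu^-=u^-$ and $T^{+t}_Hu^+=u^+$ for all $t\geqslant 0$. The key point is to control the iterated limit. Consider the family $T^{-t}_Hu^+$ for $t\geqslant 0$. By compactness of $M$ and the non-expansiveness of $T^{-t}_H$ for the infinity norm, this family is relatively compact, and any accumulation point $w$ as $t\to+\infty$ is a negative weak KAM solution for $H$, hence $w\in\s^-_H=\s^-$; in particular $w$ is also a $G$-weak KAM solution. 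The goal is to identify $w=u^-$, and to do this I would pass everything through the projected Mather set $\M_G=\M_H$ (equal by the invariance results and the symplectic invariance of the Mather set recalled before Theorem \ref{inv}), using that a $G$-weak KAM solution is determined by its values on $\M_G$.

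The central computation is to show that $T^{-t}_Hu^+$ and $u^-$ agree on the Mather set for large $t$, or in the limit. Here I would use commutation: applying $T^{-s}_G$ and $T^{-t}_H$ commute, so the two one-parameter semi-group actions on the compact convex set $\HH(0,0)/\R\mathbbm{1}$ are jointly controlled. Since $T^{-s}_Gu^+\to u^-$ and the $T^{-t}_H$ act by non-expansive commuting maps fixing the relevant limit behavior, the iterated/diagonal limit $\lim_{t}T^{-t}_Hu^+$ should be forced to equal $u^-$: on $\M_G$, the argument of Theorem \ref{inv} shows any $H$-semi-group evolution of a $G$-weak KAM solution preserves its boundary values, and since $u^+$ already pairs with $u^-$ along the $G$-flow, the trajectory-invariance of $v_0=u^--u^+$ restricted to the $\F_H$-invariant set $\M_G$ pins down the limit. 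The uniqueness statement for the critical Hamilton-Jacobi equation (two weak KAM solutions agreeing on $\M_G$ are equal) then upgrades agreement on $\M_G$ to global equality, giving $\lim_{t\to+\infty}T^{-t}_Hu^+=u^-$ and hence $u^-\sim_H u^+$.

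I expect the main obstacle to be the rigorous justification that the limit $\lim_{t\to+\infty}T^{-t}_Hu^+$ exists and equals $u^-$, rather than merely that accumulation points lie in $\s^-$. The delicate step is controlling the interaction of the two limits $s\to+\infty$ (for $G$) and $t\to+\infty$ (for $H$); the cleanest route is probably to verify agreement on $\M_G$ directly—exploiting that $u^+$ and $u^-$ coincide on $\M_G$ and that the $\F_H$-trajectories stay in $\M_G$, so the integral terms in the Lax-Oleinik formula telescope exactly as in the proof of Theorem \ref{inv}—and then invoke the uniqueness set property to finish. A symmetric argument using the positive-time characterization $\lim_{t\to+\infty}T^{+t}_Hu^-=u^+$ would give the reverse direction if needed.
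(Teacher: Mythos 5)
Your proposal is correct, but it takes a genuinely different route from the paper's. The paper reduces Theorem \ref{same}, via the characterization at the end of Theorem \ref{paired}, to a cleaner and more general proposition: for \emph{every} continuous $u:M\to\R{}$ the limits $u^-_G=\lim_{t\to+\infty} T^{-t}_Gu$ and $u^-_H=\lim_{t\to+\infty}T^{-t}_Hu$ coincide. This is proved softly, with no dynamics at all: by commutation (Theorem \ref{com}) and continuity of the semi-groups, $T^{-t}_G(T^{-s}_Hu)=T^{-s}_H(T^{-t}_Gu)\to T^{-s}_Hu^-_G=u^-_G$ as $t\to+\infty$ (the last equality by Theorem \ref{inv}); then non-expansiveness and $T^{-t}_Gu^-_H=u^-_H$ (Theorem \ref{inv} with the roles of $G$ and $H$ exchanged) give $\|T^{-t}_G(T^{-s}_Hu)-u^-_H\|_\infty\leqslant\|T^{-s}_Hu-u^-_H\|_\infty$, and letting $t\to+\infty$, then $s\to+\infty$, yields $u^-_G=u^-_H$. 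Your identification of the limit instead recycles the ``hard'' machinery of the proof of Theorem \ref{inv}: since $u^+$ and $T^{-t}_Hu^+$ are $G$-critical subsolutions (Lemma \ref{sta} and \ref{sta+}), they are differentiable on $\M_G$ with differential prescribed by the graph $\MM_G$; the $\F_H$-invariance of $\MM_G$ and the standard identification of minimizers then give the telescoping identity $T^{-t}_Hu^+(x)=u^+(x(-t))+\int_{-t}^0 L_H(\L_H^{-1}(\F_H^\sigma(x,p)))\d\sigma=u^-(x(-t))+\int_{-t}^0 L_H(\L_H^{-1}(\F_H^\sigma(x,p)))\d\sigma=u^-(x)$ for all $x\in\M_G$ and all $t\geqslant 0$, using $u^+=u^-$ on $\M_G$ and the calibration of $u^-$ (which is an $H$-solution by Theorem \ref{inv}). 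The limit $w=\lim_{t\to+\infty}T^{-t}_Hu^+$ then agrees with $u^-$ on $\M_G$, and since both are $G$-weak KAM solutions (as $\s^-_H=\s^-_G$), the uniqueness-set property of $\M_G$ forces $w=u^-$, hence $u^-\sim_Hu^+$. What each approach buys: the paper's argument is more elementary and proves a stronger statement (agreement of the two limits for arbitrary continuous initial data); yours yields the finer pointwise information that $T^{-t}_Hu^+=u^-$ on $\M_G$ already for every finite $t$, at the cost of invoking Mather's graph property, invariance of $\MM_G$, and the uniqueness set a second time.

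Three points in your write-up need tightening, all fixable within your framework. First, your middle paragraph (the ``jointly controlled'' commuting-semi-group discussion on $\HH(0,0)/\R{}\mathbbm{1}$) does no actual work; the proof is entirely the route of your final paragraph. Second, you assert $\M_G=\M_H$: the paper never establishes this (it only shows $\MM_G$ is invariant under $\F_H$), and you do not need it --- the uniqueness set to use is $\M_G$, which is legitimate precisely because both $u^-$ and $w$ lie in $\s^-_G$. Third, the claim that any accumulation point of $T^{-t}_Hu^+$ as $t\to+\infty$ is an $H$-weak KAM solution is not automatic; you should simply invoke Theorem \ref{conv}, stated in the paper for exactly this purpose, to obtain existence of the full limit before identifying it.
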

Before giving the proof of this theorem, let us recall another result of Fathi (\cite{Fa1}). Note that we are still assuming that $\alpha_G [0]=\alpha_H[0]=0$.
\begin{Th}\label{conv}
Let $u:M\to \R{}$ be a continuous function, then the functions $T^{-t}_Gu$ converge as $t$ goes to $+\infty$ to a function $u^-$ which moreover is a negative weak KAM solution. Obviously, the same holds for the time positive Lax-Oleinik semi-group and for the semi-groups associated with
 $H$.
 \end{Th}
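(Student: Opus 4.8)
The plan is to prove Theorem \ref{conv}, which asserts convergence of $T^{-t}_G u$ to a negative weak KAM solution as $t\to+\infty$. This is a classical result in weak KAM theory (Fathi), and the whole machinery is already available in the compact setting of this section. First I would use the non-expansiveness of the Lax-Oleinik semi-group for the infinity norm (recalled before DeMarr's theorem) together with the equi-Lipschitz property of the family $(T^{-t}_G u)_{t\geqslant 0}$ (from \cite{FaMa} Proposition 3.2, since after time one these iterates are uniformly Lipschitz). By the Arzel\`a--Ascoli theorem, the family is relatively compact in $C^0(M,\R{})$ for the uniform topology. The goal is then to upgrade relative compactness to actual convergence by exhibiting a monotone quantity.

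The key step is to control the behaviour of the function $t\mapsto T^{-t}_G u + t\alpha_G[0]$; recall that we have normalized $\alpha_G[0]=0$ in this section, so the relevant object is simply $t\mapsto T^{-t}_G u$. I would fix a critical subsolution $w\in\HH_G(0)$ (which exists by the weak KAM theorem) and analyze the two auxiliary quantities $\min_M(T^{-t}_G u - w)$ and $\max_M(T^{-t}_G u - w)$. By part (1) of Lemma \ref{sta}, since $w$ is a critical subsolution we have $w\leqslant T^{-s}_G w$, and combining this with the monotonicity of $T^{-s}_G$ one shows that $t\mapsto \min_M(T^{-t}_G u - w)$ is non-decreasing while $t\mapsto \max_M(T^{-t}_G u - w)$ is non-increasing. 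Both are bounded (using the equi-boundedness coming from equi-Lipschitzness on the compact $M$), hence both converge. This monotonicity is the structural heart of the argument.

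From here I would extract the limit. Take any subsequence $T^{-t_n}_G u$ converging uniformly to some $v$. One checks that $v$ is a fixed point of $T^{-s}_G$ for every $s$, i.e.\ a negative weak KAM solution: indeed by the semi-group property $T^{-s}_G(T^{-t_n}_G u)=T^{-(t_n+s)}_G u$, and passing to the limit using continuity of $T^{-s}_G$ (again \cite{FaMa} Proposition 3.3) gives $T^{-s}_G v = v$, so $v\in\s^-_G$. The monotone quantities above then force uniqueness of the limit: if two subsequential limits $v_1,v_2$ existed, both being weak KAM solutions, the limits of $\min_M(\cdot-w)$ and $\max_M(\cdot-w)$ would have to agree along the whole net, which pins down a single accumulation point and promotes subsequential convergence to full convergence of $T^{-t}_G u$ as $t\to+\infty$.

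The main obstacle I anticipate is the promotion from subsequential limits to genuine convergence, i.e.\ ruling out oscillation between distinct weak KAM solutions. The monotonicity of the extremal quantities is what resolves this, but making it rigorous requires a careful comparison argument showing that the difference of two weak KAM solutions, or of a weak KAM solution and $T^{-t}_G u$, is controlled in a one-sided monotone way; this is precisely where the contraction/non-expansiveness for the sup-norm and the variational characterization of the semi-group are used in tandem. Since all of these ingredients are cited results of Fathi available in the compact case (\cite{Fa}, \cite{Fa1}), and the parallel statements for $T^{+t}_G$ and for $H$ follow by the symmetry $\widehat{G}$ introduced in Section~3, the remaining work is bookkeeping rather than new ideas.
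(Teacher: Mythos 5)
First, a point of comparison: the paper does not prove this theorem at all. It is recalled verbatim as a result of Fathi, with the proof delegated to \cite{Fa1}; it is then used as a black box in the proof of Theorem \ref{same}. So there is no ``paper proof'' for your argument to parallel, and the question is whether your sketch could stand as a proof of Fathi's convergence theorem. It cannot: there are concrete gaps, and the strategy itself cannot be repaired.

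The central gap is the step where you claim a subsequential limit $v$ of $T^{-t_n}_G u$ is a weak KAM solution. From $T^{-s}_G(T^{-t_n}_G u)=T^{-(t_n+s)}_G u$ and continuity of $T^{-s}_G$ you get $T^{-s}_G v=\lim_n T^{-(t_n+s)}_G u$, but nothing identifies the right-hand side with $v$: the times $t_n+s$ form a different sequence, and identifying the two limits is precisely the convergence you are trying to prove. Compactness only gives that the $\omega$-limit set of the orbit is nonempty, compact and invariant under the semi-group (which, being non-expansive, acts on it by isometries); points of an invariant set need not be fixed points. Note that if you could establish this one step, everything else in your sketch would be superfluous: taking $v$ itself as the comparison function, non-expansiveness makes $t\mapsto\|T^{-t}_G u-v\|_\infty$ non-increasing, and since it tends to $0$ along $t_n$ it tends to $0$, giving full convergence at once. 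So the entire weight of the theorem rests on the step you assumed. The auxiliary monotone quantities do not fill this hole, and are themselves partly wrong: for a critical subsolution $w$ one only has $w\leqslant T^{-s}_G w$, which makes $\min_M(T^{-t}_G u-w)$ non-decreasing but says nothing about $\max_M(T^{-t}_G u-w)$ (the inequality runs the wrong way, since $T^{-s}_G w$ may exceed $w$); and even when both quantities are monotone (e.g.\ $w\in\s^-_G$), their limits are two numbers $a\leqslant b$, and knowing $\min_M(v-w)=a$ and $\max_M(v-w)=b$ for every accumulation point $v$ does not single out one function.

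There is also a structural reason why no argument of this kind can succeed. Every ingredient you invoke --- Arzel\`a--Ascoli, non-expansiveness, order-preservation, existence of critical subsolutions and of fixed points --- is equally available for the discrete Lax--Oleinik semi-group of a time-periodic Tonelli Lagrangian, and Fathi and Mather constructed examples in that setting where the iterates $T^{-n}u$ do \emph{not} converge. Hence any correct proof must exploit the autonomous structure in an essential way. This is what Fathi's proof in \cite{Fa1} does: it first establishes convergence of $T^{-t}_G u$ on the (projected) Aubry set, using calibrated curves and the vanishing of the Peierls barrier there, and then propagates convergence to all of $M$ using the fact that weak KAM solutions are determined by their values on that set --- the same kind of uniqueness-set argument this paper uses, via $\M_G$, in its proof of Theorem \ref{inv}. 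None of this dynamical input appears in your sketch, so the remaining work is not ``bookkeeping'': it is the theorem.
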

 Now, using  the last part of \ref{paired}, the proof of \ref{same} is a direct consequence of the following proposition (and of its analog for the positive time Lax-Oleinik semi-groups):
 \begin{pr}
 Let $u:M\to \R{}$ be a continuous function, and let $u_G^-$ be the limit of the $T^{-t}_Gu$ (resp. $u_H^-$ be the limit of the $T^{-t}_Hu$). Then we have that $u^-_G=u^-_H$.
 \end{pr}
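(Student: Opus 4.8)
The plan is to combine three facts already at our disposal: the commutation of the negative-time Lax-Oleinik semi-groups (Theorem \ref{com}), their non-expansiveness for the sup norm in the compact case, and Theorem \ref{inv}, which asserts $\s^-_G=\s^-_H=\s^-$. Because we have normalized $\alpha_G[0]=\alpha_H[0]=0$, the last fact says that $u^-_G$, being a $G$-weak KAM solution, is automatically an $H$-weak KAM solution and is therefore fixed by every $T^{-s}_H$; symmetrically $u^-_H$ is fixed by every $T^{-t}_G$. These fixed-point identities are what will make the estimates below collapse.

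First I would introduce the doubly-indexed family
$$a_{s,t}=T^{-s}_H T^{-t}_G u=T^{-t}_G T^{-s}_H u,$$
the second equality being exactly the commutation property. The heuristic is that $a_{s,t}\to u^-_G$ as $t\to\infty$ (apply the continuous map $T^{-s}_H$ to $T^{-t}_G u\to u^-_G$ and use $T^{-s}_Hu^-_G=u^-_G$) while $a_{s,t}\to u^-_H$ as $s\to\infty$ (apply $T^{-t}_G$ to $T^{-s}_H u\to u^-_H$ and use $T^{-t}_Gu^-_H=u^-_H$), and a diagonal argument should force the two limits to agree.

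The decisive step is to upgrade these convergences to estimates that are uniform in the other variable, which is where non-expansiveness enters. Writing $\|\cdot\|$ for the sup norm, the identity $T^{-s}_H u^-_G=u^-_G$ gives
$$\|a_{s,t}-u^-_G\|=\|T^{-s}_H(T^{-t}_G u)-T^{-s}_H u^-_G\|\leqslant \|T^{-t}_G u-u^-_G\|,$$
a bound independent of $s$; symmetrically, $T^{-t}_G u^-_H=u^-_H$ gives $\|a_{s,t}-u^-_H\|\leqslant\|T^{-s}_H u-u^-_H\|$, independent of $t$. By Theorem \ref{conv} both right-hand sides tend to $0$. Evaluating on the diagonal $s=t$ and letting $t\to\infty$ makes both $\|a_{t,t}-u^-_G\|$ and $\|a_{t,t}-u^-_H\|$ vanish, so the triangle inequality yields $\|u^-_G-u^-_H\|=0$, i.e. $u^-_G=u^-_H$.

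The one point I would verify most carefully is precisely the legitimacy of the fixed-point identities $T^{-s}_H u^-_G=u^-_G$ and $T^{-t}_G u^-_H=u^-_H$: these rest entirely on Theorem \ref{inv} together with the normalization of the critical values, and without the equality $\s^-_G=\s^-_H$ the non-expansiveness estimates would not telescope. Beyond that there is no genuine analytic obstacle; everything is formal once commutation and non-expansiveness are granted. Finally, the same argument applied to the symmetric Hamiltonians $\widehat{G},\widehat{H}$ (via Theorem \ref{inv+}) yields the positive-time analog, and the two together complete the proof of Theorem \ref{same} through the characterization of paired solutions by their limits in Theorem \ref{paired}.
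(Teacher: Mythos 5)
Your proof is correct and takes essentially the same route as the paper: both arguments combine the commutation of the semi-groups, the fixed-point identities $T^{-s}_H u^-_G=u^-_G$ and $T^{-t}_G u^-_H=u^-_H$ supplied by Theorem \ref{inv} (with the normalization $\alpha_G[0]=\alpha_H[0]=0$), and non-expansiveness for the sup norm. The only cosmetic difference is that you symmetrize the two estimates and pass to a diagonal limit via the triangle inequality, whereas the paper first sends $t\to+\infty$ (using continuity of $T^{-s}_H$) and then $s\to+\infty$.
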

 \begin{proof}
 We begin with the proof that for any positive $s$, the functions $T^{-t}_G (T^{-s}_Hu)$ still converge to $u^-_G$ as $t$ goes to infinity.
 \\
 It is a simple consequence of the commutation property of the semi-groups (\ref{comu}). As a matter of fact, for all $t$ the following holds
 $$T^{-t}_G (T^{-s}_Hu)=T^{-s}_H (T^{-t}_Gu)$$
 and by continuity of the Lax-Oleinik semi-group, the functions $T^{-s}_H (T^{-t}_Gu)$ converge to 
 $T^{-s}_H (u^-_G)=u^-_G$ by \ref{inv}.
 \\
 Now, let us recall that the Lax-Oleinik semi-groups are $1$-Lipschitz for the infinity norm, therefore we have that
 $$\forall (s,t)\in \mathbb{R}_+^2,\  \|T^{-t}_G (T^{-s}_Hu)-u^-_H\|_\infty=\|T^{-t}_G (T^{-s}_Hu)-T^{-t}_G u^-_H\|_\infty\leqslant \| (T^{-s}_Hu)- u^-_H\|_\infty.$$
 Letting $t$ go to $+\infty$ we obtain that
 $$\forall s\in \mathbb{R}_+,\ \| u^-_G- u^-_H\|_\infty\leqslant \| (T^{-s}_Hu)- u^-_H\|_\infty\xrightarrow[s\to +\infty]{} 0,$$
 this proves the result.
 \end{proof}
  We end this section by the following theorem which is a straight consequence of the definitions and of \ref{same}:
  \begin{Th}\label{equal} 
  The following equalities hold:
  $$\widehat {\mathcal{A}}_G=\widehat {\mathcal{A}}_H,\ 
  \widehat {\mathcal{N}}_G=\widehat {\mathcal{N}}_H,\ 
  h_G=h_H.$$
  \end{Th}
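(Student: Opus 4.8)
The plan is to observe that all three objects are defined by the \emph{same} recipe applied to the relation $\sim_G$ (respectively $\sim_H$), so everything reduces to showing that these two pairings coincide as relations. The only content is Theorem \ref{same} together with its symmetric counterpart; the passage from "the relations agree" to "the Aubry sets, Ma\~n\'e sets, and Peierls barriers agree" is then purely formal.

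First I would note that Theorems \ref{inv} and \ref{inv+} give $\s^-_G=\s^-_H=\s^-$ and $\s^+_G=\s^+_H=\s^+$. Hence $\sim_G$ and $\sim_H$ are both relations on the \emph{same} product $\s^-\times\s^+$, so it even makes sense to compare them. Theorem \ref{same} supplies one inclusion: for any pair $(u^-,u^+)\in\s^-\times\s^+$, if $u^-\sim_G u^+$ then $u^-\sim_H u^+$. The key remark is that all the hypotheses under which \ref{same} is proved are symmetric in $G$ and $H$: both are Tonelli, they Poisson commute, $M$ is compact, and we have normalized $\alpha_G[0]=\alpha_H[0]=0$. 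Exchanging the roles of $G$ and $H$ in the proof of \ref{same} therefore yields the reverse implication $u^-\sim_H u^+\Rightarrow u^-\sim_G u^+$, and consequently
$$\{(u^-,u^+)\in \s^-\times\s^+:\ u^-\sim_G u^+\}=\{(u^-,u^+)\in \s^-\times\s^+:\ u^-\sim_H u^+\}.$$

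With this equality of indexing families in hand, I would conclude by inspecting the three definitions. For each pair $(u^-,u^+)$ the associated data are intrinsic to the pair and do not refer to the Hamiltonian: the set $\{(x,d_x u^-):\ u^-(x)=u^+(x)\}$ (well defined since $u^-$ is semi-concave, $u^+$ semi-convex, and $u^-\geqslant u^+$) and the number $u^-(y)-u^+(x)$. Since $\widehat{\mathcal A}_G$, $\widehat{\mathcal N}_G$, $h_G$ are respectively the intersection, the union, and the pointwise supremum of these data over the pairs with $u^-\sim_G u^+$, and the collection of such pairs is identical for $G$ and $H$, the three objects coincide term by term:
$$\widehat{\mathcal A}_G=\widehat{\mathcal A}_H,\qquad \widehat{\mathcal N}_G=\widehat{\mathcal N}_H,\qquad h_G=h_H.$$

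The main (and essentially only) point requiring care is the symmetry argument in the first paragraph: one must make sure that Theorem \ref{same} is genuinely invoked with $G$ and $H$ interchanged, which is legitimate precisely because its statement and proof use $G$ and $H$ only through symmetric properties. Everything else is a direct unwinding of the definitions, so there is no substantial obstacle once \ref{same}, \ref{inv}, and \ref{inv+} are available.
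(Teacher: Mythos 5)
Your proposal is correct and follows exactly the route the paper intends: the paper dispatches Theorem \ref{equal} in one line as ``a straight consequence of the definitions and of \ref{same}'', and your argument simply makes explicit what that line leaves implicit, namely that the hypotheses are symmetric in $G$ and $H$ (so \ref{same} applies in both directions, giving $\sim_G\,=\,\sim_H$ as relations on $\s^-\times\s^+$, which is well posed thanks to \ref{inv} and \ref{inv+}), after which the three equalities follow by unwinding the definitions. Nothing in your write-up deviates from or adds to the paper's approach; it is the same proof, carefully spelled out.
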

  
  \section{Flats of Mather's $\alpha$ function}
  In this section, the underlying manifold $M$ is still compact. We will need the following notation: if $H$ is a Tonelli Hamiltonian, we will denote the set $\AA_H\subset TM$ defined as follows:
  $$\AA_H=\L_H^{-1}\left(\AAA_H\right).$$
  Given a Tonelli Hamiltonian $H$ and a closed $1$-form $\om : T^*M\to \R{}$, Mather noticed in \cite{MaZ} that the Hamiltonian $H_\om$ defined by 
  $$\forall (x,p)\in T^* M, \  H_\om (x,p)=H(x,p+\om_x)$$
is still Tonelli, therefore it admits a critical value and this critical value depends only on the cohomology class of $\om$ which we denote $[\om]\in H^1(M,\R{})$. We call $\alpha_H[\om]$ the critical value of $H_\om$. Mather also proves that the function $\alpha_H :   H^1(M,\R{})\to \R{}$ is convex and superlinear. The function $\alpha_H$ is sometimes called effective Hamiltonian in homogenization theory.  A flat of the function $\alpha_H$ is a convex set $C\subset  H^1(M,\R{})$ on which $\alpha_H$ is linear. The following theorem is proved in \cite{Mas} and \cite{Be3}:

\begin{Th}\label{aubry}
Assume that Mather's $\alpha_H$ function is affine between two cohomology classes $[\om_0]$ and $[\om_1]$, that is
$$\forall t\in (0,1), \ \alpha_H [t\om_0 +(1-t)\om_1]=t\alpha_H[\om_0]+(1-t)\alpha_H[\om_1]$$
then the following holds:
$$\forall t\in (0,1),\  \AA_{H_{\om_t}}\subset \AA_{H_{\om_0}}\cap \AA_{H_{\om_1}}$$
where we have used the notation $\om_t=t\om_0 +(1-t)\om_1$.\\
In particular, the Aubry sets are constant in the relative interior of a flat of the $\alpha_H$ function and the $\alpha_H$ function must be constant on this flat.
\end{Th}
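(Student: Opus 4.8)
The plan is to prove the two assertions separately: first the inclusion of the projected Aubry sets by a convexity argument on the Peierls barrier, and then the lift to the full Aubry sets together with the constancy of $\alpha_H$ by exploiting the strict convexity of $H$ in the fibres. Throughout I identify a critical subsolution of $H_\om$ with a closed $1$-form $\eta$ satisfying $[\eta]=[\om]$ and $H(x,\eta_x)\leqslant\alpha_H[\om]$ for all $x$ (writing $\eta=\om+\d u$ for the usual subsolution $u$). Under this identification the translation $S_\om:(x,p)\mapsto(x,p+\om_x)$, which is a symplectomorphism since $\om$ is closed, conjugates the flow of $H_\om$ to that of $H$ and sends the Aubry set of $H_\om$ into the level $\{H=\alpha_H[\om]\}$; moreover one checks $\L_{H_\om}=S_{-\om}\circ\L_H$, so that $\AA_{H_\om}=\L_H^{-1}(S_\om(\AAA_{H_\om}))$ is the $\L_H$-preimage of the identified cotangent Aubry set. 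Since I will also show $\alpha_H[\om_0]=\alpha_H[\om_1]$, all three identified Aubry sets will live in the common level $\{H=\alpha_H[\om_0]\}$, and the desired inclusion in $TM$ will follow from the inclusion of identified cotangent Aubry sets by applying the single map $\L_H^{-1}$.

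For the projected inclusion I would use the Peierls barrier. Denote by $\mathcal A_{H_\om}=\pi_1(\AAA_{H_\om})$ the projected Aubry set and by $h_\om$ the Peierls barrier of $H_\om$ at its critical value, normalised so that $h_\om\geqslant0$ on the diagonal and $\mathcal A_{H_\om}=\{x: h_\om(x,x)=0\}$. For a closed curve $\gamma$ based at $x$, run in time $\tau$, the action splits as $\int_0^\tau L_{H}(\gamma,\dot\gamma)\,\d\sigma-\langle[\om],[\gamma]\rangle+\tau\alpha_H[\om]$, because $L_{H_\om}=L_H-\om$ and $\int_\gamma\om$ depends only on $[\om]\in H^1(M,\R{})$ and on the homology class $[\gamma]$. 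Since both $[\om]\mapsto\langle[\om],[\gamma]\rangle$ and, by hypothesis, $t\mapsto\alpha_H[\om_t]$ are affine along the segment, for fixed $(\gamma,\tau)$ the $\om_t$-action is the convex combination of the $\om_0$- and $\om_1$-actions; taking the infimum over $(\gamma,\tau)$ and then the $\liminf$ in $\tau$ yields $h_{\om_t}(x,x)\geqslant t\,h_{\om_0}(x,x)+(1-t)\,h_{\om_1}(x,x)$. If $x\in\mathcal A_{H_{\om_t}}$ the left-hand side vanishes, and as both terms on the right are nonnegative and $t\in(0,1)$ they must both vanish, i.e. $x\in\mathcal A_{H_{\om_0}}\cap\mathcal A_{H_{\om_1}}$.

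For the lift and the constancy of $\alpha_H$ I would use strict convexity. Fix $C^{1,1}$ critical subsolutions $\eta_0,\eta_1$ of $H_{\om_0},H_{\om_1}$ (these exist by the Fathi--Siconolfi and Bernard regularisation already invoked in Section 3, see \cite{Be1},\cite{fatfigrif}) and set $\eta_t=t\eta_0+(1-t)\eta_1$, a closed form in the class $[\om_t]$; convexity of $H$ in $p$ and the affine hypothesis give $H(x,\eta_t(x))\leqslant t\alpha_H[\om_0]+(1-t)\alpha_H[\om_1]=\alpha_H[\om_t]$, so $\eta_t$ is a critical subsolution of $H_{\om_t}$. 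Take any $x\in\mathcal A_{H_{\om_t}}$, which is nonempty since $M$ is compact. By the fundamental property that on the projected Aubry set every critical subsolution saturates the equation (Fathi--Siconolfi), $H(x,\eta_t(x))=\alpha_H[\om_t]$ and $\eta_t(x)$ is the Aubry covector. The chain
$$\alpha_H[\om_t]=H\big(x,t\eta_0(x)+(1-t)\eta_1(x)\big)\leqslant tH(x,\eta_0(x))+(1-t)H(x,\eta_1(x))\leqslant\alpha_H[\om_t]$$
is therefore a chain of equalities; the second equality together with $H(x,\eta_i(x))\leqslant\alpha_H[\om_i]$ and $t\in(0,1)$ forces $H(x,\eta_0(x))=\alpha_H[\om_0]$ and $H(x,\eta_1(x))=\alpha_H[\om_1]$, while the first (the convexity) equality together with the strict convexity of $H$ in the fibres forces $\eta_0(x)=\eta_1(x)$. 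Evaluating $H$ at this common covector gives $\alpha_H[\om_0]=\alpha_H[\om_1]$, hence $\alpha_H$ is constant on the whole flat. Finally, since by the previous paragraph $x\in\mathcal A_{H_{\om_0}}$, the identified Aubry covector of $\om_0$ over $x$ is exactly $\eta_0(x)$, which equals $\eta_t(x)$, the identified Aubry covector of $\om_t$; thus the identified cotangent Aubry set of $\om_t$ is contained in that of $\om_0$, and symmetrically in that of $\om_1$. Applying $\L_H^{-1}$ gives $\AA_{H_{\om_t}}\subset\AA_{H_{\om_0}}\cap\AA_{H_{\om_1}}$.

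The main obstacle is precisely this last step: passing from the projected inclusion to the full Aubry set and, simultaneously, obtaining constancy of $\alpha_H$. The naive attempt --- noting that $\eta_0$ saturates the equation at the single point $x$ --- is insufficient, since saturation at one point does not place $x$ in the Aubry set of $\om_0$; this is why the projected inclusion must be established independently, via the Peierls barrier, and then combined with the pointwise identity $\eta_0(x)=\eta_t(x)$. The two inputs that make the argument work are the Fathi--Siconolfi saturation property on the Aubry set and the strict convexity of $H$: the latter is what converts equality in the convexity inequality into the identification of covectors, and thence into $\alpha_H[\om_0]=\alpha_H[\om_1]$. A minor technical care is needed to ensure that the splitting of the action over closed curves depends on $\om$ only through its cohomology class, which is what makes the Peierls-barrier inequality exact and hence usable; working with the globally $C^{1,1}$ subsolutions $\eta_0,\eta_1$ avoids the measure-theoretic difficulties that would arise if one tried to run the strict-convexity argument with arbitrary Lipschitz subsolutions along the Aubry orbit.
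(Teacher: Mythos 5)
There is nothing in the paper to compare your argument against: Theorem \ref{aubry} is not proved there at all, it is imported from Massart \cite{Mas} and Bernard \cite{Be3} and then used as a black box in the final section. So your proposal can only be judged on its own merits, and on those merits it is correct; it is essentially a reconstruction of the argument of the cited references. The two-step architecture is the right one. Step (i): for a loop $\gamma$ based at $x$, the $H_{\om}$-action plus $\tau\alpha_H[\om]$ equals $\int_0^\tau L_H(\gamma,\dot\gamma)\,\d\sigma-\langle[\om],[\gamma]\rangle+\tau\alpha_H[\om]$, which under the affineness hypothesis is, curve by curve, the \emph{exact} convex combination of the corresponding quantities for $\om_0$ and $\om_1$; superadditivity of $\inf$ and $\liminf$ gives $h_{\om_t}(x,x)\geqslant t\,h_{\om_0}(x,x)+(1-t)\,h_{\om_1}(x,x)$, and nonnegativity of the Peierls barrier on the diagonal, together with the characterization of the projected Aubry set as $\{x:\,h(x,x)=0\}$, yields the projected inclusion. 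Step (ii): at $x\in\mathcal{A}_{H_{\om_t}}$ the saturation property of critical subsolutions on the Aubry set makes your chain of inequalities a chain of equalities; strict fiberwise convexity then forces $\eta_0(x)=\eta_1(x)$, whence $\alpha_H[\om_0]=H(x,\eta_0(x))=H(x,\eta_1(x))=\alpha_H[\om_1]$, and the graph property (the cotangent Aubry set is the graph over $\mathcal{A}$ of the common differential of all critical subsolutions) converts the projected inclusion plus this pointwise identity into the inclusion of the identified cotangent Aubry sets; applying $\L_H^{-1}$ via your (correct) identity $\L_{H_\om}^{-1}=\L_H^{-1}\circ S_\om$ gives the statement in $TM$. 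You also correctly diagnosed the one real trap: saturation of $\eta_0$ at a single point does not by itself put that point in $\mathcal{A}_{H_{\om_0}}$, which is exactly why step (i) must be established independently. Two minor remarks. First, the $C^{1,1}$ regularity of $\eta_0,\eta_1$ is a convenience, not a necessity: every critical subsolution is differentiable on the projected Aubry set with the canonical differential, which is all you use. Second, the closing assertion of the theorem (constancy of the Aubry sets on the relative interior of a flat) still needs the one-line observation that any two points $c,c'$ of the relative interior satisfy $c\in(c',c'')$ for some $c''$ in the flat, so that your inclusion can be applied in both directions; you left this implicit, but it is immediate.
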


We now want to study the relation between flats of $\alpha$ functions of two commuting Hamiltonians. The following proposition shows that the question is legitimate :

  \begin{pr}\label{como}
Let $G$ and $H$ be two commuting  Tonelli Hamiltonians. Then for any closed one form, $\om$ on $M$, we have that $G_\om$ and  $ H_\om$ Poisson commute.
\end{pr}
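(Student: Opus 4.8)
The plan is to recognize the fiberwise translation by $\om$ as a symplectomorphism of $T^*M$ and then to invoke the standard fact that symplectomorphisms preserve the Poisson bracket. Concretely, I would introduce the map $\tau_\om : T^*M \to T^*M$ defined by $\tau_\om(x,p) = (x, p+\om_x)$. By the very definition of the perturbed Hamiltonians one has $G_\om = G\circ \tau_\om$ and $H_\om = H\circ \tau_\om$, so the whole statement reduces to showing that $\tau_\om$ is a symplectomorphism; indeed, once that is established, the identity $\{G_\om, H_\om\} = \{G,H\}\circ \tau_\om = 0$ follows at once from the commutation hypothesis $\{G,H\}=0$.

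The key computation is that $\tau_\om^*\lambda = \lambda + \pi^*\om$, where $\pi : T^*M\to M$ denotes the canonical projection. This is immediate from the intrinsic definition of the Liouville form, $\lambda_{(x,p)} = p\circ \d\pi$, together with the fact that $\tau_\om$ fixes the base point, i.e. $\pi\circ\tau_\om = \pi$. Applying $-\d$ and using that pullback commutes with the exterior derivative then gives $\tau_\om^*\Omega = -\d(\tau_\om^*\lambda) = \Omega - \pi^*(\d\om)$. This is precisely the step where the hypothesis that $\om$ is \emph{closed} enters: since $\d\om = 0$, we obtain $\tau_\om^*\Omega = \Omega$, so $\tau_\om$ is a symplectomorphism.

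It then remains to recall the general principle that a symplectomorphism $\tau$ intertwines Hamiltonian vector fields, $X_{F\circ\tau} = (\d\tau)^{-1}\, X_F\circ\tau$, and consequently preserves the Poisson bracket: for any two functions $F_1,F_2$ one has $\{F_1\circ\tau, F_2\circ\tau\} = \{F_1,F_2\}\circ\tau$, which follows by substituting this expression for the two Hamiltonian fields into $\Omega(X_{F_1\circ\tau},X_{F_2\circ\tau})$ and using $\tau^*\Omega=\Omega$. Taking $\tau = \tau_\om$, $F_1 = G$, $F_2 = H$, and using $\{G,H\}=0$ yields $\{G_\om, H_\om\}=0$, which is the claim.

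I do not anticipate any serious obstacle: the entire content is the observation that closed one-forms act on $T^*M$ by symplectomorphisms, with closedness used exactly to kill the term $\pi^*(\d\om)$. The only points needing a little care are the sign and convention bookkeeping in $\tau_\om^*\lambda = \lambda + \pi^*\om$ and the invocation of the (routine) lemma that symplectomorphisms preserve Poisson brackets.
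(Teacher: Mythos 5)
Your proof is correct and follows exactly the same route as the paper: the paper's (one-line) proof likewise reduces the claim to the facts that the fiberwise translation $(x,p)\mapsto(x,p+\om_x)$ is a symplectomorphism when $\om$ is closed and that symplectomorphisms preserve the Poisson bracket. You have simply written out in full the computations ($\tau_\om^*\lambda = \lambda + \pi^*\om$, hence $\tau_\om^*\Omega = \Omega$) that the paper leaves implicit.
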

\begin{proof}
It is a direct consequence of the definition of the Poisson bracket and the fact that when $\om$ is closed, the map $\psi_\om : T^*M\to T^*M$ defined by
$$\forall (x,p)\in T^*M, \ \psi_\om (x,p)=(x,p+\om_x)$$
is symplectic.
\end{proof}
  
  We may now state the main result of this section:
\begin{Th}
Let $M$ be a compact $C^2$ closed manifold and $G,H$ two commuting Tonelli Hamiltonians on $T^*M$. Let us denote by $C_G\subset H^1(M,\R{})$ a flat of $\alpha_G$ on which it  is therefore constant. Then $C_G$ is also a flat of $\alpha_H$.
\end{Th}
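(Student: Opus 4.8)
The plan is to recast flatness of $\alpha_H$ as the existence of a common action-minimizing measure, and to build such a measure for $H$ out of one for $G$ by averaging along the $H$-flow; the commutation results of the previous sections are exactly what make this averaging legitimate. First I would reduce the problem: since a flat is any convex set on which the function is affine and $C_G$ is convex, it suffices to show that $\alpha_H$ is affine on every segment $[\om_0,\om_1]\subset C_G$ (identifying classes with representing closed forms), because a convex function affine on each such segment is affine on all of $C_G$. Set $\om_t=(1-t)\om_0+t\om_1$. The tool is Mather's variational formula together with Fenchel duality: writing
\[
-\alpha_H[\om_c]=\min_{\mu}\int (L_H-\om_c)\,\d\mu,
\]
the minimum being over $\F_H$-invariant probability measures (via $\L_H$), the convexity estimate $\alpha_H[\om_t]\leqslant (1-t)\alpha_H[\om_0]+t\alpha_H[\om_1]$ is an equality for every $t$ if and only if some single measure minimizes the ($\om$-twisted) $H$-action simultaneously at $\om_0$ and $\om_1$, i.e. a common Mather measure exists. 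Indeed, the minimum of a convex combination of the two functionals is at least the combination of the two minima, and equality forces a common minimizer. The identical equivalence holds for $G$.

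Next I would produce the input measure on the $G$-side and transport the Aubry sets. Since $\om_0,\om_1\in C_G$ and $\alpha_G$ is affine (in fact constant) on $C_G$, the previous paragraph yields an $\F_G$-invariant probability measure $n$ that is $G$-action minimizing at both $\om_0$ and $\om_1$; in particular $\supp n$ lies in the $G$-Mather set, hence in the $G$-Aubry set, at each class. By Proposition \ref{como} the Hamiltonians $G_{\om_i}$ and $H_{\om_i}$ Poisson commute, so Theorem \ref{equal}---applied after the harmless normalization $\alpha_G[\om_i]=\alpha_H[\om_i]=0$, the Aubry set being insensitive to additive constants---gives $\AAA_{G_{\om_i}}=\AAA_{H_{\om_i}}$. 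Transporting by the symplectomorphism $\psi_{\om_i}$ of Proposition \ref{como}, the set
\[
\mathcal{K}_i:=\psi_{\om_i}(\AAA_{G_{\om_i}})=\psi_{\om_i}(\AAA_{H_{\om_i}})
\]
is simultaneously the $G$- and the $H$-Aubry set at cohomology $[\om_i]$, hence invariant under both commuting flows $\F_G$ and $\F_H$. Thus $\supp n\subset\mathcal{K}_0\cap\mathcal{K}_1$, a compact $\F_H$-invariant set.

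Finally I would average $n$ along the $H$-flow. By Krylov--Bogolyubov a weak-$*$ limit $m$ of $\frac1T\int_0^T(\F_H^\sigma)_*n\,\d\sigma$ is an $\F_H$-invariant probability measure, and since $\mathcal{K}_0\cap\mathcal{K}_1$ is compact and $\F_H$-invariant we keep $\supp m\subset\mathcal{K}_0\cap\mathcal{K}_1$. Now an $\F_H$-invariant measure supported in the $H$-Aubry set $\mathcal{K}_i$ is automatically $H$-action minimizing at $[\om_i]$: this follows at once from the calibration of $H$-critical subsolutions along orbits in the Aubry set (\cite{Fa}), which forces the twisted $H$-action of such a measure to equal $-\alpha_H[\om_i]$. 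Hence $m$ is a common Mather measure for $H$ at $\om_0$ and $\om_1$, and by the first reduction $\alpha_H$ is affine on $[\om_0,\om_1]$; this also reproves, for $H$, the converse direction of Theorem \ref{aubry} that we need. I expect the crux to be exactly this last transfer, where commutation is used in an essential way: averaging a $G$-invariant measure over the $H$-flow would be pointless were the shared Aubry set $\mathcal{K}_i$ not invariant under \emph{both} flows, and it is Proposition \ref{como} together with Theorem \ref{equal} that guarantees this. The only remaining work is routine bookkeeping between the Lagrangian and Hamiltonian (Legendre) pictures and the normalization of the critical values.
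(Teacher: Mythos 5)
Your proof is correct, but it follows a genuinely different route from the paper's. The paper quotes the Massart--Bernard result (Theorem \ref{aubry}) to get that the $G$-Aubry sets coincide for all classes in the relative interior of $C_G$, transfers this to $H$ via Proposition \ref{como} and Theorem \ref{equal}, and then concludes \emph{pointwise}: picking a single point of the common Aubry set and invoking Carneiro's theorem (the Aubry set lies in the critical energy level), it reads off $\alpha_H[\om_1]=H_{\om_1}(x,p)=H_{\om_2}(x,p')=\alpha_H[\om_2]$, so $\alpha_H$ is in fact \emph{constant} on $C_G$. You keep the same essential commuting input --- Proposition \ref{como} plus the Aubry-set equality $\AAA_{G_{\om_i}}=\AAA_{H_{\om_i}}$ of Theorem \ref{equal}, which is where commutation enters in both arguments --- but you replace both Theorem \ref{aubry} and Carneiro's theorem by a measure-theoretic mechanism: the elementary equivalence (affine on a segment $\Longleftrightarrow$ existence of a common Mather measure), the inclusion of the Mather set in the Aubry set, Krylov--Bogolyubov averaging of the common $G$-measure along $\F_H$ inside the doubly invariant set $\mathcal{K}_0\cap\mathcal{K}_1$, and the calibration fact that any invariant measure carried by the Aubry set is minimizing. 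What each approach buys: the paper's proof is shorter given the nontrivial external input of Theorem \ref{aubry}, and it directly yields constancy of $\alpha_H$ on $C_G$; yours is more self-contained (no Massart--Bernard, no Carneiro), works directly for arbitrary pairs $\om_0,\om_1\in C_G$ rather than only in the relative interior (so no closure/continuity step is needed), but delivers affineness rather than constancy --- which is exactly what the definition of a flat requires, with constancy then recoverable by applying Theorem \ref{aubry} to $\alpha_H$. Two bookkeeping points you rightly flag but should write out in a final version: the common $G$-measure lives on $TM$ and must be pushed forward by $\L_G$ to sit in $\mathcal{K}_0\cap\mathcal{K}_1\subset T^*M$ (using $\L_G=\psi_{\om_i}\circ\L_{G_{\om_i}}$), and symmetrically the averaged measure $m$ must be pulled back by $\L_H^{-1}$, under which $\mathcal{K}_i$ becomes precisely the Lagrangian Aubry set $\AA_{H_{\om_i}}$, before the calibration argument is applied.
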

\begin{proof}
Let us consider $\om_1,\om_2$ two closed forms whose cohomology classes belong to the relative interior of $C_G$. As seen in \ref{aubry}, we then have the following equality:
$$\AA_{G_{\om_1}}= \AA_{G_{\om_2}}$$
which after taking the Legendre transform yields
\begin{equation}\label{commo}
\AAA_{G_{\om_1}}+\om_1= \AAA_{G_{\om_2}}+\om_2
\end{equation}
 (where we denote $\AAA_{G_{\om_1}}+\om_1=\{(x,p+\om_{1,x}), (x,p)\in \AAA_{G_{\om_1}}\}$ and $\AAA_{G_{\om_2}}+\om_2=\{(x,p+\om_{2,x}), (x,p)\in \AAA_{G_{\om_2}}\}$). Now using \ref{commo}, \ref{como} and \ref{equal} we obtain that
$$\AAA_{H_{\om_1}}+\om_1=\AAA_{G_{\om_1}}+\om_1= \AAA_{G_{\om_2}}+\om_2= \AAA_{H_{\om_2}}+\om_2.$$
But we also know that if $(x,p)\in\AAA_{H_{\om_1}}$ then $(x,p')=(x,p+(\om_1-\om_2)(x))\in \AAA_{H_{\om_2}}$ and the following holds (where we use Carneiro's theorem (\cite{Car}) stating that the Aubry set lies in the critical energy level of the Hamiltonian):
\begin{multline*}
\alpha_H[\om_1]=H_{\om_1}(x,p)\\
=H(x,p+\om_{1,x} )=H(x,p'+\om_{2,x} )\\
=H_{\om_2}(x,p')=\alpha_H[\om_2].
\end{multline*}
This proves that $\alpha_H$ is constant on $C_G$.
\end{proof}

 \bibliography{commuting}
\bibliographystyle{alpha}

\end{document}